\let\origsection=\section \def\section{\@ifstar{\origsection*}{\mysection}} 
\def\mysection{\@startsection{section}{1}\z@{.7\linespacing\@plus\linespacing}{.5\linespacing}{\normalfont\scshape\centering\S}}
\renewcommand{\PrintDOI}[1]{\doi{#1}}
\numberwithin{equation}{section}
\numberwithin{figure}{section}
\let\polishlcross=\l
\def\l{\ifmmode\ell\else\polishlcross\fi}
\def\paragraph#1{
  \noindent\textbf{#1.}\enspace}
\let\emptyset=\varnothing
\let\setminus=\smallsetminus
\let\sm=\setminus
\def\moverlay{\mathpalette\mov@rlay}
\def\mov@rlay#1#2{\leavevmode\vtop{   \baselineskip\z@skip \lineskiplimit-\maxdimen
   \ialign{\hfil$\m@th#1##$\hfil\cr#2\crcr}}}
\newcommand{\charfusion}[3][\mathord]{
    #1{\ifx#1\mathop\vphantom{#2}\fi
        \mathpalette\mov@rlay{#2\cr#3}
      }
    \ifx#1\mathop\expandafter\displaylimits\fi}
\DeclareFontFamily{U}  {MnSymbolC}{}
\DeclareSymbolFont{MnSyC}         {U}  {MnSymbolC}{m}{n}
\DeclareFontShape{U}{MnSymbolC}{m}{n}{
    <-6>  MnSymbolC5
   <6-7>  MnSymbolC6
   <7-8>  MnSymbolC7
   <8-9>  MnSymbolC8
   <9-10> MnSymbolC9
  <10-12> MnSymbolC10
  <12->   MnSymbolC12}{}
\DeclareMathSymbol{\powerset}{\mathord}{MnSyC}{180}
\let\phi=\varphi
\theoremstyle{plain}
\newtheorem{thm}{Theorem}[section]
\newtheorem{theorem}[thm]{Theorem}
\newtheorem{lemma}[thm]{Lemma}
\newtheorem{corollary}[thm]{Corollary}
\newtheorem{proposition}[thm]{Proposition}
\newtheorem{problem}[thm]{Problem}
\newtheorem{case}{Case}[]
\theoremstyle{definition}
\newtheorem{example}[thm]{Example}
\newcommand\thankssymb[1]{\textsuperscript{\@fnsymbol{#1}}}
\begin{document}

\author[M.~Hamann]{Matthias Hamann\thankssymb{2}}
\address{Matthias Hamann, University of Hamburg, Department of Mathematics, Bundesstr. 55, 20146 Hamburg, Germany}
\email{\tt matthias.hamann@math.uni-hamburg.de}
\thanks{\thankssymb{2} Funded by the Deutsche Forschungsgemeinschaft (DFG) - Project No.\ 549406527.}

\author[K.~Heuer]{Karl Heuer}
\address{Karl Heuer, Technical University of Denmark, Department of Applied Mathematics and Computer Science, Richard Petersens Plads, Building 322, 2800 Kongens Lyngby, Denmark}
\email{\tt karheu@dtu.dk}

\title[]{Infinite grids in digraphs}

\date\today

\keywords{infinite graphs, digraphs, infinite grids, ends, unavoidable subdigraphs}

\subjclass[2020]{05C63, 05C20 (primary); 05C38 (secondary)}

\begin{abstract}
   Halin proved that every graph with an end $\omega$ containing infinitely many pairwise disjoint rays admits a subdivision of the infinite quarter-grid as a subgraph where all rays from that subgraph belong to~$\omega$. We will prove a corresponding statement for digraphs, that is, we will prove that every digraph that has an end with infinitely many pairwise disjoint directed rays contains a subdivision of a grid-like digraph all of whose directed rays belong to that end.
\end{abstract}

\maketitle

\section{Introduction}
\label{sec:intro}

Halin's grid theorem \cite{Halin65}*{Satz~4$'$} characterizes ends of graphs that contain infinitely many pairwise disjoint \emph{rays}, i.\,e.\ one-way infinite paths.
For this, an \emph{end} is an equivalence class of rays, where two rays are equivalent if there are infinitely many pairwise disjoint paths between them.
A \emph{subdivision} of a graph $G$ is a graph obtained from~$G$ by replacing edges by new paths between the incident vertices of that edge such that the new paths are internally disjoint and have no inner vertex in the vertex set of~$G$.
The \emph{hexagonal quarter-grid}, denoted by~$H^{\infty}$, is the graph in Figure~\ref{fig:hexgrid}.

\begin{figure}[ht]
    \centering
\begin{tikzpicture}
    \draw (0,0) -- (0,3.25);
    \foreach \y in {0,0.5,1,1.5,2,2.5,3}
        \fill (0,\y) circle (2pt);

    \draw (1,0) -- (1,3.25);
    \foreach \y in {0,0.5,1,1.5,2,2.5,3}
        \fill (1,\y) circle (2pt);
    
    \draw (2,0.5) -- (2,3.25);
    \foreach \y in {0.5,1,1.5,2,2.5,3}
        \fill (2,\y) circle (2pt);

    \draw (3,1) -- (3,3.25);
    \foreach \y in {1,1.5,2,2.5,3}
        \fill (3,\y) circle (2pt);

    \draw (4,1.5) -- (4,3.25);
    \foreach \y in {1.5,2,2.5,3}
        \fill (4,\y) circle (2pt);

    \foreach \y in {0,1,2,3}
    \draw (0,\y) -- (1,\y);

    \foreach \y in {0.5,1.5,2.5}
    \draw (1,\y) -- (2,{\y});

    \foreach \y in {1,2,3}
    \draw (2,\y) -- (3,{\y});

    \foreach \y in {1.5,2.5}
    \draw (3,\y) -- (4,{\y});

    \draw (4,2) -- (4.25,2);
    \draw (4,3) -- (4.25,3);

    \draw (2,3.5);
    \fill (2,3.5) circle (1pt);
    \draw (2,3.75);
    \fill (2,3.75) circle (1pt);
    \draw (2,4);
    \fill (2,4) circle (1pt);    

    \draw (4.25,2.25);
    \fill (4.25,2.25) circle (1pt);
    \draw (4.5,2.25);
    \fill (4.5,2.25) circle (1pt);
    \draw (4.75,2.25);
    \fill (4.75,2.25) circle (1pt);    

\end{tikzpicture}
    \caption{The hexagonal quarter-grid $H^{\infty}$.}
    \label{fig:hexgrid}
\end{figure}
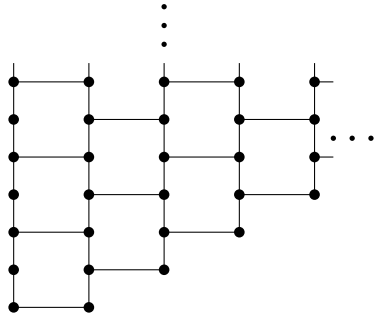

Now we are able to state Halin's theorem.

\begin{theorem}\cite{Halin65}*{Satz~4$'$}\label{thm:Halin_grid_thm}
    Whenever an undirected graph contains infinitely many pairwise disjoint and equivalent rays, then it contains a subdivision of~$H^{\infty}$.
\end{theorem}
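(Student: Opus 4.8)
The plan is to build the required subdivision of~$H^{\infty}$ inside~$G$ as an increasing union $\bigcup_{n\in\mathbb{N}}H_n^{\ast}$ of \emph{finite} subdivisions $H_n^{\ast}$ of finite subgraphs $H_n\subseteq H^{\infty}$, where $H^{\infty}=\bigcup_n H_n$ is a fixed exhaustion in which $H_{n+1}$ arises from $H_n$ by prolonging each already-built initial column segment by one vertex, adding one further rung to each pair of consecutive columns already present, and starting one new column with its first rung. We may assume $G$ connected, fix pairwise disjoint rays $R_0,R_1,\dots$ of the end~$\omega$, and record the fact used throughout: for every \emph{finite} $S\subseteq V(G)$ at most $|S|$ of the $R_i$ meet~$S$, so $\omega$ still contains infinitely many pairwise disjoint rays in $G-S$ --- a finite deletion cannot lower the vertex-degree of~$\omega$.

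The recursion maintains, after step~$n$: the finite subdivision $H_n^{\ast}$ of~$H_n$; for each current uppermost column vertex~$t$ of~$H_n^{\ast}$ (a \emph{tip}) a ray $R_t$ of~$\omega$ starting at~$t$, the $R_t$ pairwise disjoint and meeting $H_n^{\ast}$ only in their tips; and, disjoint from $H_n^{\ast}\cup\bigcup_t R_t$, infinitely many further pairwise disjoint \emph{reserve} rays of~$\omega$. Such a configuration exists to begin with, and the recorded fact reinstates the ``infinitely many reserves'' clause after each step, since a step disturbs only finitely many new vertices. Step $n+1$ has three local moves: walk one vertex along each $R_t$ to prolong the columns (and redeclare the tips); for each consecutive pair of columns add one new rung, realised by a suitable path between the two relevant tip-rays; and turn one reserve ray into a new column, joined to the previous last column by one more such connecting path. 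All that ``a path between two tip-rays'' needs is that two equivalent rays of~$\omega$ are linked by infinitely many pairwise disjoint paths --- this is where the hypothesis is used --- so one of them avoids the finite set $V(H_n^{\ast})$, and shortening it to a minimal subpath makes it internally disjoint from the two rays as well.

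I expect the real obstacle to be the disjointness bookkeeping \emph{inside} a single step: a connecting path obtained as above avoids the finite set $V(H_n^{\ast})$, but a priori it may run through another tip-ray or a reserve ray, and having many disjoint paths between two equivalent rays gives no control over a third. The intended remedy combines minimality with re-routing: choose the connecting path minimal with respect to \emph{all} current tip- and reserve-rays at once, so that it meets exactly one of them and only at its far endpoint; if that is not the ray it was meant to reach, reassign which available rays currently play which columns and which stay in reserve, drawing on the still-infinite supply from the fact above and, if need be, retracting the last one or two vertices of the affected columns, and repeat. Since a step adds only finitely many connecting paths, these adjustments are finite; arranging them so that the full invariant survives is the technical crux of the proof.

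Once the recursion has run, $\bigcup_n H_n^{\ast}$ is a subdivision of $\bigcup_n H_n=H^{\infty}$, which is the assertion. With a little more care in the re-routing one keeps one subdivided column equal to a tail of~$R_0$; then, $H^{\infty}$ and hence its subdivision being one-ended, every ray of the subdivision is equivalent in it --- and so in~$G$ --- to that column and therefore lies in~$\omega$. This recovers the formulation of Halin's original statement quoted in the abstract, though more than is needed for the theorem as stated.
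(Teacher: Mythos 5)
The paper does not prove this statement at all: it is quoted from Halin and used as background, so there is no internal proof to compare against. The closest analogue in the paper is the proof of Theorem~\ref{thm:planarGrid}, whose directed construction confronts exactly the difficulty your sketch runs into, and resolves it with machinery (large disjoint path families, pigeonhole on the permutations $\sigma_{j,k}$ of attachment orders, passing to nested subsequences $I_j$) that your proposal does not have.

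The gap is the one you yourself flag as ``the technical crux'', and the proposed remedy does not close it. For the move that starts a \emph{new} column, ``take a connecting path minimal with respect to all current rays'' is fine: whichever reserve ray the path happens to end on can be declared the new column. But for a \emph{rung between two already-existing columns} $i$ and $i+1$, the path must end on the specific tip-ray of column $i+1$, and above all attachment points used so far on it; minimality only guarantees the path ends on \emph{some} ray, with no control over which. The reassignment idea cannot repair this, because column $i+1$ already consists of a built initial segment with rungs attached to it at prescribed heights --- you cannot swap its continuation onto a different ray without a connecting path from its current tip to that ray, which is the same problem again, and ``retracting the last one or two vertices'' does not undo rungs attached lower down. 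Moreover, even if each rung could be placed individually, a single step requires several rungs simultaneously, pairwise disjoint and attached in increasing order along each column; nothing in the sketch controls the relative order of their endpoints on a shared column, which is needed for the result to be a subdivision of $H^{\infty}$. The standard proofs (and the paper's directed analogue) avoid fixing the column rays once and for all: they extract, from a large family of disjoint transition paths between whole families of rays, a subfamily whose endpoints appear in a consistent order, via a Ramsey/pigeonhole step. Without some such ordering argument the recursion as described cannot be shown to maintain its invariant.
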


As a first attempt to obtain an analogous result for directed graphs, \emph{digraphs} for short, Zuther \cite{Z1998}*{Theorem 3.1} proved that, if a digraph has an infinite increasing sequence of distinct ends, each of which contains a ray, then it contains a subdivision of the digraph obtained from $H^\infty$ by orienting the horizontal edges to the right and the vertical ones upwards.
Here, the \emph{ends} of digraphs are defined by Zuther~\cites{Z1997,Z1998} analogously as equivalence classes of rays and anti-rays, where a (\emph{anti-})\emph{ray} is an orientation of a one-way infinite path such that each edge is oriented towards (resp.~away from) infinity.
(We refer to Section~\ref{sec:prelims} for a precise definition and for the definition of an order on the ends).

Our main theorem is the following result, where the bidirected quarter-grid is the digraph in Figure~\ref{fig:D_1} and the reversed bidirected quarter-grid is obtained from the bidirected quarter-grid by reversing the direction of all edges.

\begin{figure}[ht]
    \centering
\begin{tikzpicture}
    \draw[-{Latex[length=2.75mm]}] (1.5,0) -- (8.25,0);
    \foreach \x in {1.5,2,2.5,3,3.5,5,5.5,7,7.5}
        \fill (\x,0) circle (2pt);

    \draw (1,-0.3) node[anchor=south] {$R_1$};

    \draw[-{Latex[length=2.75mm]}] (2,1) -- (9.25,1);
    \foreach \x in {2,2.5,3,3.5,4,4.5,5,5.5,6,6.5,7,7.5,8,8.5}
        \fill (\x,1) circle (2pt);

    \draw (1.5,0.7) node[anchor=south] {$R_2$};

    \draw[-{Latex[length=2.75mm]}] (4,2) -- (10.25,2);
    \foreach \x in {4,4.5,6,6.5,7,7.5,8,8.5,9,9.5}
        \fill (\x,2) circle (2pt);

    \draw (3.5,1.7) node[anchor=south] {$R_3$};

    \draw[-{Latex[length=2.75mm]}] (7,3) -- (11.25,3);
    \foreach \x in {7,7.5,9,9.5,10,10.5}
        \fill (\x,3) circle (2pt);

    \draw (6.5,2.7) node[anchor=south] {$R_4$};

    \draw[-{Latex[length=2.75mm]}] (10,3) -- (10,3.75);
    \draw[-{Latex[length=2.75mm]}] (10.5,3.75) -- (10.5,3);

    \draw[-{Latex[length=2.75mm]}] (2,0) -- (2,1);
    \draw[-{Latex[length=2.75mm]}] (3,0) -- (3,1);
    \draw[-{Latex[length=2.75mm]}] (5,0) -- (5,1);
    \draw[-{Latex[length=2.75mm]}] (7,0) -- (7,1);    
    
    \draw[-{Latex[length=2.75mm]}] (2.5,1) -- (2.5,0);
    \draw[-{Latex[length=2.75mm]}] (3.5,1) -- (3.5,0);
    \draw[-{Latex[length=2.75mm]}] (5.5,1) -- (5.5,0);
    \draw[-{Latex[length=2.75mm]}] (7.5,1) -- (7.5,0);    
    
    \draw[-{Latex[length=2.75mm]}] (4,1) -- (4,2);
    \draw[-{Latex[length=2.75mm]}] (6,1) -- (6,2);

    \draw[-{Latex[length=2.75mm]}] (4.5,2) -- (4.5,1);

    \draw[-{Latex[length=2.75mm]}] (6.5,2) -- (6.5,1);

    \draw[-{Latex[length=2.75mm]}] (8,1) -- (8,2);
    \draw[-{Latex[length=2.75mm]}] (8.5,2) -- (8.5,1);    

    \draw[-{Latex[length=2.75mm]}] (7,2) -- (7,3);  
    \draw[-{Latex[length=2.75mm]}] (7.5,3) -- (7.5,2);    
    
    \draw[-{Latex[length=2.75mm]}] (9,2) -- (9,3);  
    \draw[-{Latex[length=2.75mm]}] (9.5,3) -- (9.5,2);

    \draw (8.25,0.5);
    \fill (8.25,0.5) circle (1pt);
    \draw (8.5,0.5);
    \fill (8.5,0.5) circle (1pt);
    \draw (8.75,0.5);
    \fill (8.75,0.5) circle (1pt);

    \draw (9.25,1.5);
    \fill (9.25,1.5) circle (1pt);
    \draw (9.5,1.5);
    \fill (9.5,1.5) circle (1pt);
    \draw (9.75,1.5);
    \fill (9.75,1.5) circle (1pt);

    \draw (10.25,2.5);
    \fill (10.25,2.5) circle (1pt);
    \draw (10.5,2.5);
    \fill (10.5,2.5) circle (1pt);
    \draw (10.75,2.5);
    \fill (10.75,2.5) circle (1pt);

    \draw (11.25,3.5);
    \fill (11.25,3.5) circle (1pt);
    \draw (11.5,3.65);
    \fill (11.5,3.65) circle (1pt);
    \draw (11.75,3.8);
    \fill (11.75,3.8) circle (1pt);

\end{tikzpicture}
    \caption{The bidirected quarter-grid.}
    \label{fig:D_1}
\end{figure}
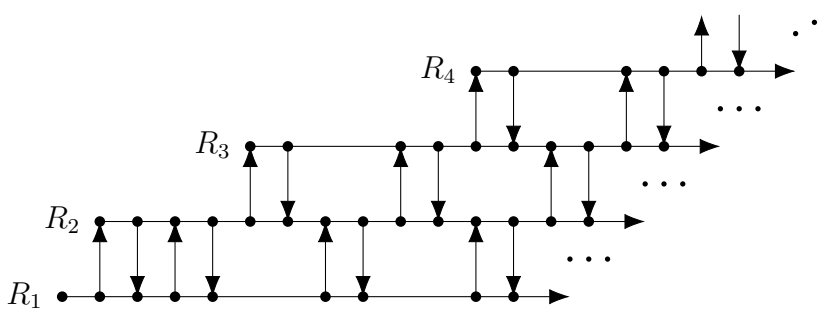

\begin{thm}\label{thm:planarGridIntro}
If $D$ is a digraph that contains an end~$\omega$ with infinitely many disjoint (anti-)rays, then there exists a subdivision of the (reversed) bidirected quarter-grid in~$D$ with all its (anti-)rays in~$\omega$.
\end{thm}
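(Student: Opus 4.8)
The plan is to adapt the inductive strategy behind Halin's \cref{thm:Halin_grid_thm}, building the desired subdivision one row at a time, while overcoming the two genuinely new difficulties that directions create.

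A reduction first: by reversing every edge of~$D$ it suffices to treat the case that $\omega$ contains infinitely many pairwise disjoint directed rays, the anti-ray case then yielding the reversed bidirected quarter-grid; so assume from now on that $\omega$ has infinitely many pairwise disjoint directed rays, and that we must produce a subdivision of the bidirected quarter-grid all of whose rays lie in~$\omega$. The only property of~$\omega$ that I would use is that any two directed rays $R,S$ in~$\omega$ are \emph{two-way linked}, and robustly so: for every finite $F\subseteq V(D)$ there are infinitely many pairwise disjoint directed $R$--$S$ paths avoiding~$F$, and likewise infinitely many pairwise disjoint directed $S$--$R$ paths avoiding~$F$. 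Robustness is automatic, since a finite set meets only finitely many members of a disjoint family of paths --- and this is exactly what makes an end's rays usable.

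What I would actually build is a \emph{flexible grid}: pairwise disjoint directed rays $R_1,R_2,\dots\in\omega$, together with, for each~$i$, an infinite family~$\mathcal F_i$ of directed paths, each having one endpoint on~$R_i$, one endpoint on~$R_{i+1}$ and no other vertex on $\bigcup_j R_j$, of which infinitely many run from~$R_i$ to~$R_{i+1}$ and infinitely many from~$R_{i+1}$ to~$R_i$, such that all paths in $\bigcup_i\mathcal F_i$ are pairwise disjoint apart from their shared endpoints on the rays. The rays and the families I would construct simultaneously, in stages, each stage committing only finitely much new material --- a few more edges on the finitely many rays built so far, one new ray, and the next finite batch of connecting paths --- while keeping on hand an infinite \emph{reservoir} of pairwise disjoint rays of~$\omega$ disjoint from everything committed. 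To add the next connecting path between~$R_n$ and a reservoir ray~$T$ (about to become~$R_{n+1}$), say an ``up'' path, take one of the infinitely many pairwise disjoint directed $R_n$--$T$ paths furnished by two-way linkage that avoids the finite committed part, and repair it by trimming it at its last vertex on~$R_n$ and again at the first vertex thereafter lying on any ray, committed or reservoir; the ``down'' paths come symmetrically from the $T$--$R_n$ linkage. Discarding the finitely many survivors that clash with one another, interleaving the surviving up- and down-paths, and trimming the rays so that endpoints occur along each~$R_i$ in the order prescribed by the bidirected quarter-grid completes a stage. Throughout one keeps the configuration \emph{confined to}~$\omega$, meaning that for every finite~$F$ all but finitely much of it lies in the component of $D-F$ containing a tail of~$R_1$; this is precisely what forces every ray of the resulting subdivision to lie in~$\omega$ (such a ray either shares infinitely many vertices with some~$R_i$, hence is equivalent to it, or visits infinitely many of the rays, in which case confinement still traps it in~$\omega$). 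After infinitely many stages, the exact bidirected quarter-grid is extracted from the flexible grid by passing to a suitable subfamily of connecting paths and relabelling --- a routine selection-and-trimming argument.

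The main obstacle is the repair step, where the directed setting bites twice. A directed path cannot be reversed, so the ``down'' paths $R_{i+1}\to R_i$ are not obtained by flipping the ``up'' paths: one must genuinely exploit the second half of two-way linkage, and the two halves compete for the same vertices. More seriously, a directed $R_n$--$T$ path may be forced through one of the lower rays~$R_j$ with $j<n$, and there is no Menger-type rerouting around an infinite committed ray of the sort the undirected argument uses so freely. What presumably saves the construction is the discipline of committing rays only along finite initial segments before placing the connecting paths that might cross them, and extending each ray afterwards along whatever of~$\omega$ remains available, so that ``avoid the committed part'' always refers to a finite set; when a repaired path nevertheless lands on a lower ray rather than on the reservoir, one absorbs this either by reinterpreting it as a connection of the appropriate kind or by re-choosing which reservoir ray plays the role of~$R_{n+1}$. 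Showing that this bookkeeping --- keeping the reservoir infinite, keeping the rays in~$\omega$, maintaining confinement --- can be sustained through all stages is where the real work of the proof lies.
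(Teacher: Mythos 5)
There is a genuine gap, and it sits exactly where you locate "the real work": the repair/absorption step. Your construction presumes that, once rays $R_1,\dots,R_n$ are committed and a reservoir ray $T$ is chosen, two-way linkage plus trimming will yield infinitely many disjoint connecting dipaths between $R_n$ and $T$ \emph{in both directions} that avoid all other committed rays. This is not a property of ends of digraphs, and no amount of re-choosing which reservoir ray plays $R_{n+1}$ restores it. The descending cyclically directed quarter-grid (Figure~\ref{fig:D_2}\subref{subfig:D_2_2}) is a concrete obstruction: there the only edge from $R_1$ into $R_2$ is $x_2^1x_1^2$, and every other $R_1$--$R_2$ dipath is forced to descend through $R_3$ (indeed through every higher ray it first jumps to). So as soon as a third vertical ray is committed, the family of $R_1$--$R_2$ dipaths avoiding it is finite, and the same happens for any three vertical rays one picks. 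The theorem is still true for this digraph, but only because a subdivision of the bidirected quarter-grid can be found along \emph{diagonal} rays that weave through the cyclic structure --- rays that are not in any natural "reservoir" of disjoint rays of~$\omega$ and that your procedure has no mechanism to discover. "Reinterpreting a path that lands on a lower ray as a connection of the appropriate kind" is precisely the point at which a global case distinction is unavoidable, not a local bookkeeping fix.

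This is why the paper does not build the grid greedily. It encodes, for each finite set of disjoint rays of~$\omega$, which pairs admit infinitely many connecting dipaths avoiding the remaining rays (the auxiliary ray digraphs), proves a structure theorem for finite strong digraphs (Theorem~\ref{thm:Kasper}: a long dicycle, a long semi-chain of short dicycles, or a large $(m,1)$-system of dipaths must appear), and then runs three separate limit constructions in Theorem~\ref{thm:planarGrid}, producing a cyclically directed quarter-grid, a bidirected quarter-grid, or a complete ray digraph respectively; the first and third are then shown to contain subdivisions of the second. Your "flexible grid" corresponds only to the semi-chain case. The remaining two cases --- where backward connections between consecutive rays are inherently routed through other rays, or where the connection pattern is star-like --- are exactly the configurations your construction cannot absorb, so the proposal as written does not prove the theorem. (Your reduction to the ray case by edge reversal, and the use of robust two-way linkage modulo a \emph{finite} set, are both fine; the failure is only in upgrading "avoid a finite set" to "avoid the infinitely many other rays".)
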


This theorem follows from a more detailed result (Theorem~\ref{thm:planarGrid}), which classifies ends containing infinitely many disjoint rays into three different types.
The classification will be in terms of recurring auxiliary digraphs which are defined on rays within such a fixed end and encode how these rays are connected to each other.
Each classifying term is characterising for the existence of a subdivision of a certain digraph; namely for the bidirected quarter-grid, for a cyclically directed quarter-grid and for a complete ray digraph (see Section~\ref{sec:subdivision} for precise definitions of these digraphs).
From this point of view, Theorem~\ref{thm:planarGrid} can be seen as an analogous result to \cite{ubiquity-2}*{Theorem~1.2} where a similar classification is proved for ends of undirected graphs.

We note that our result is stronger than Zuther's theorem, since it is possible to find bidirected quarter-grids in Zuther's grid-like digraphs.
(We refer to the end of Section~\ref{sec:subdivision} for a discussion how to find bidirected quarter-grids in Zuther's grid-like digraphs.)

Independently, Reich \cite{R2024+} recently obtained Theorem~\ref{thm:planarGridIntro} with a slightly different notion of bidirected quarter-grid, which turns out to be equivalent in that his bidirected quarter-grid contains ours as a subdivision and vice versa.

Whereas Reich's proof relies on a detailed analysis of an infinite, strongly connected auxiliary digraph allowing to prescribe a set of rays among which all vertical ones are chosen, ours looks at a sequence of finite, strongly connected digraphs and uses ideas similar to those in \cites{Heuer_full-grid, ubiquity-2}.
In order to investigate that sequence, we prove a result for finite, strongly connected digraphs (Theorem~\ref{thm:Kasper}), which might be considered interesting in its own.
It can be seen as a directed analogue of the fact that every large enough finite, connected graph contains either a vertex of high degree or a long path.

In Section~\ref{sec:thinends}, we will apply Theorem~\ref{thm:Kasper} also in order to find grid-like structures in ends that only contain finitely many pairwise disjoint rays (anti-rays), see Theorem~\ref{thm:thinEnds}.

\medskip

This paper is structured as follows. 
After introducing some terminology in Section~\ref{sec:prelims}, we prove the structural result for finite, strongly connected digraphs in Section~\ref{sec:fin_digraphs}.
In Section~\ref{sec:subdivision}, we will prove Theorem~\ref{thm:planarGridIntro} and, in Section~\ref{sec:thinends}, we will prove the result on ends with only finitely many pairwise disjoint rays or anti-rays.
We finish in Section~\ref{sec:edge grid} with a discussion of the situation when considering pairwise edge-disjoint rays or anti-rays.

\section{Preliminaries}
\label{sec:prelims}

For general facts and notation regarding graphs we refer the reader to~\cite{diestel}, regarding digraphs in particular to~\cite{bang-jensen}.

A digraph $D$ is called \emph{strongly connected} if for every two vertices $u,v \in V(D)$ there exists a path directed from~$u$ to~$v$ in~$D$.
For the sake of brevity, we call a digraph $D$ \emph{strong} if it is strongly connected, a directed cycle just a \emph{dicycle} and a directed path just a \emph{dipath}.
For a dipath $P$ containing two vertices $a$ and $b$ in this order, i.\,e.\ such that $b$ is reached from $a$ via $P$, we denote by $aPb$ the subdipath of~$P$ starting at~$a$ and ending at~$b$.
For two vertex sets $A$ and $B$, a dipath $P$ is an \emph{$A$--$B$ dipath} if $P$ starts in~$A$, ends in~$B$ and is internally disjoint from $A \cup B$.
In case $A$ or $B$ is a singleton set, we may omit the set brackets with respect to this notation.

A digraph $R$ is called a \emph{ray} if it has precisely one vertex~$v$ with out-degree~$1$ and in-degree $0$, there exists a dipath from~$v$ to each vertex of~$R$, and each vertex distinct from $v$ has out-degree $1$ and in-degree $1$.
An \emph{anti-ray} is obtained from a ray by reversing all its edges.
The vertex $v$ is called the \emph{starting vertex} (resp.\ \emph{end vertex}) of the ray (resp.\ anti-ray).
We say that a ray (or anti-ray) starts (ends) in a vertex set $A$ if it has its starting vertex (end vertex) in~$A$.
For a ray $R$ with starting vertex $v$ and some $x \in V(R)$ we denote by $Rx$ the subdipath $vRx$ of~$R$.
A \emph{tail} of~$R$ is a subray of~$R$.
If this tail starts at~$x$, then we denote it by~$xR$.
Similarly, for an anti-ray $Q$ with end vertex~$v$ and some $x\in V(Q)$, we denote by $xQ$ the subdipath $xQv$ of~$Q$ and by $Qx$ the subanti-ray of~$Q$ that ends at~$x$, which we will also call a \emph{tail} of~$Q$.

Let $Q$ and~$R$ be rays or anti-rays.
We write $Q\leq R$ if there are infinitely many pairwise disjoint $Q$--$R$ dipaths and we write $Q\sim R$ if $Q\leq R$ and $R\leq Q$.
Then $\leq$ is a preorder on the set of rays and anti-rays in a digraph~$D$ and $\sim$ is an equivalence relation on that set.
The equivalence classes of~$\sim$ are the \emph{ends} of~$D$ and we can extend the relation $\leq$ to the ends: we write $\eta\leq\omega$ for ends $\eta$ and $\omega$ if there are $Q\in\eta$ and $R\in\omega$ with $Q\leq R$.
Note that $\eta\leq\omega$ if and only if $Q\leq R$ for every $Q\in\eta$ and $R\in\omega$.
In particular, we have $\eta\leq\omega$ and $\omega\leq\eta$ if and only if $\eta=\omega$.

In \cite{HH2024+}, it was shown that an end that contains $n$ pairwise disjoint rays for all $n\in\mathbb N$ also contains infinitely many pairwise disjoint rays.
The \emph{in-degree} of an end is then the maximum number (within $\mathbb{N} \cup \{ \infty \}$) of pairwise disjoint rays in that end, and the \emph{out-degree} is the maximum number of pairwise disjoint anti-rays in that end.
We call an end \emph{thick} if it contains infinitely many pairwise disjoint rays and we call it \emph{thin} if it contains at most $n$ pairwise disjoint rays for some $n\in\mathbb N$.
If it is clear from the context, we also speak about thick and thin ends, when considering anti-rays.

\section{Unavoidable subdigraphs in strong digraphs}
\label{sec:fin_digraphs}

This section considers only finite digraphs.
The purpose of this section is to prove Theorem~\ref{thm:Kasper}, which qualitatively states that every large enough strong digraph contains an arbitrarily large strong subdigraph of a certain type.
There are only three types of these subdigraphs, which are differently structured while all maintaining strong connectivity.
Hence, Theorem~\ref{thm:Kasper} forms an analogue for digraphs to the following folklore result about undirected graphs.

\begin{proposition}\label{prop:unavoidable_1-con}
    For every $r \in \mathbb{N}$ there exists an $N \in \mathbb{N}$ such that every connected graph on at least $N$ vertices contains a path of length $r$ or a star with $r$ leaves as a subgraph.
\end{proposition}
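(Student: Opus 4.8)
The plan is to deduce the statement from a pigeonhole argument on a spanning tree. Since $G$ is connected, it has a spanning tree $T$; root $T$ at an arbitrary vertex~$v$ and, for a vertex $u$ of~$T$, let its \emph{depth} be the length of the unique $v$--$u$ path in~$T$. There are then two easy ways to conclude. First, if some vertex $u$ of~$T$ has depth at least~$r$, then the $v$--$u$ path in~$T$ has length at least~$r$; as $T$ is a subgraph of~$G$, an initial segment of this path is a path of length exactly~$r$ in~$G$, and we are done. Second, if some vertex $u$ of~$T$ has at least~$r$ children, then $u$ together with $r$ of these children and the $r$ tree-edges joining them forms a star with $r$ leaves in~$G$, and again we are done.

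So suppose neither case occurs: every vertex of~$T$ has depth at most~$r-1$ and at most~$r-1$ children. Then an easy induction on~$i$ shows that $T$ has at most $(r-1)^i$ vertices at depth~$i$, so $|V(G)|=|V(T)|\le\sum_{i=0}^{r-1}(r-1)^i$. Consequently, taking $N:=\sum_{i=0}^{r-1}(r-1)^i+1$ (or any larger value, for instance $N=r^r$) forces one of the two previous cases to apply, which proves the proposition.

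I do not expect a genuine obstacle here, as the statement is folklore; the only things that need a moment's care are the trivial-but-essential observations that a path, resp.\ a star, found inside the spanning tree $T$ is automatically a subgraph of~$G$, and that a rooted tree all of whose vertices have bounded depth and bounded branching has boundedly many vertices. One could equally well produce $T$ by a breadth- or depth-first search from~$v$, but an arbitrary rooted spanning tree already suffices.
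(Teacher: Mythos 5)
Your proof is correct. Note that the paper states this proposition explicitly as a folklore result and gives no proof of it, so there is no in-paper argument to compare against; however, your spanning-tree pigeonhole argument is essentially the same layering-and-counting idea the authors use for the directed analogue, Lemma~\ref{lem:high_deg}, where the sets $D_i$ of vertices at distance $i$ from a fixed root play the role of your depth classes and the inequality $|D_{i+1}|\le (n-1)\,|D_i|$ is exactly your bounded-branching bound. The only point worth a remark is that your case split is slightly wasteful (a non-root vertex with $r-1$ children already has degree $r$ in $T$ and hence yields the star), but this only affects the constant $N$, not the correctness.
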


To prepare the proof of the main result of this section, we start with the following auxiliary lemma.

\begin{lemma}\label{lem:high_deg}
For every $n \in \mathbb{N}$ there exists an $N \in \mathbb{N}$ such that for any strong digraph $D$ on at least $N$ vertices one of the following is true:
\begin{enumerate}[label = (\arabic*)]
\item There exists a dipath of length at least $n$ in $D$.
\item There exists a vertex $v \in V(D)$ with at least $n$ many out-neighbours.
\end{enumerate}
\end{lemma}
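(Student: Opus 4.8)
The idea is a standard breadth-first-search / levelling argument adapted to digraphs. Fix $n$ and set $N := n^{n}$ (or any similar bound; the exact value is not important for the plan). Suppose $|V(D)| > N$ and that (2) fails, i.e. every vertex has at most $n-1$ out-neighbours; I want to produce a dipath of length at least $n$. Pick any vertex $v_0$ and define the out-distance layers $L_i$ consisting of all vertices reachable from $v_0$ by a dipath of length exactly $i$ but not shorter; equivalently $L_i = \{u : \mathrm{dist}(v_0,u) = i\}$, where $\mathrm{dist}$ denotes the length of a shortest dipath from $v_0$. Since $D$ is strong, every vertex is reachable from $v_0$, so the layers $L_0, L_1, L_2, \dots$ partition $V(D)$. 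Because every vertex has out-degree at most $n-1$, a trivial induction gives $|L_i| \le (n-1)^i$, hence $|L_0 \cup \dots \cup L_{n-1}| \le 1 + (n-1) + \dots + (n-1)^{n-1} < n^n = N < |V(D)|$. Therefore some layer $L_i$ with $i \ge n$ is nonempty, and any vertex in it is the endpoint of a shortest $v_0$-dipath of length $i \ge n$, giving conclusion (1).

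The one technical point to get right is the bound $|L_i| \le (n-1)^i$: every vertex in $L_{i}$ must be an out-neighbour of some vertex in $L_{i-1}$ (this is exactly the BFS property — a shortest dipath to a vertex at distance $i$ passes through a vertex at distance $i-1$), so $|L_i| \le (n-1)\,|L_{i-1}|$, and the bound follows by induction from $|L_0| = 1$. Strong connectivity is used only to guarantee $\bigcup_i L_i = V(D)$; in fact weak connectivity would not suffice here, but having every vertex reachable from $v_0$ does, so one could even weaken the hypothesis to "$v_0$ can reach every vertex" if desired.

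I do not expect a genuine obstacle: the argument is elementary, and the only thing to be careful about is bookkeeping with the geometric sum so that the chosen $N$ really forces a layer beyond depth $n-1$ to be occupied. An alternative, essentially equivalent, formulation avoids layers entirely: if $D$ has no dipath of length $\ge n$, then iteratively deleting sources (vertices of in-degree $0$) terminates in at most $n$ rounds — but in a strong digraph there are no sources, so one instead argues via a longest dipath $P = x_0 x_1 \cdots x_k$ and notes that if $k < n$ then counting vertices reachable within $k$ steps of out-degree $\le n-1$ from $x_0$... this again reduces to the same layer count, so the BFS presentation above is the cleanest and is the one I would write up.
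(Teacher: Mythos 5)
Your proof is correct and follows essentially the same route as the paper's: both fix a root vertex, partition $V(D)$ into BFS out-distance layers, use the out-degree bound to get $|L_i|\le (n-1)^i$, and conclude via the geometric sum that a layer of index at least $n$ must be occupied. The only cosmetic difference is that the paper phrases it as a direct contradiction with $N=\sum_{i=0}^{n-1}(n-1)^i$ rather than exhibiting the long shortest dipath explicitly.
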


\begin{proof}
Assume there does not exist a dipath of length at least $n$ in~$D$.
Fix an arbitrary vertex $w \in V(D)$.
For $i \in \mathbb{N}$, let $D_i$ denote the set of vertices that are reached from $w$ by a shortest dipath of length $i$ in $D$.
Since $D$ is strong, we know that $V(D) = \bigcup^{n-1}_{i = 0} D_i$.
If $D$ does not contain any vertex of out-degree at least $n$, then $|D_{i+1}| \leq (n-1) \cdot |D_{i}|$ for every $i < n-1$.
Hence we have
\[|V(D)| \leq \sum^{n-1}_{i = 0} (n-1)^i = \frac{(n-1)^n - 1}{n-2} =: N .\]
Consequently, by taking $V(D) > N$ we either get a dipath of length at least $n$ or a vertex of out-degree at least $n$ in~$D$.
\end{proof}

Next we define one type of the involved strong digraphs that appear in the main result of this section.

Let $x$ and $y$ be two, potentially equal vertices.
For some non-negative integers $k, \ell$ with $k+\ell>0$, a system of $k + \ell$ internally disjoint dipaths (or dicycles in case $x = y$) where $k$ of them are $x$--$y$ dipaths and $\ell$ of them are $y$--$x$ dipaths is called a \emph{$(k, \ell)$-system of $x$--$y$ dipaths}.
We also speak about a \emph{$(k, \ell)$-system of dipaths} if we do not specify the vertices $x$ and $y$.
A $(k,\ell)$-system $\mathcal{P}$ of $x$--$y$ dipaths is called \emph{$n$-short} if $|V(P) \cup V(Q)| < n$ for all $x$--$y$ dipaths $P \in \mathcal{P}$ and all $y$--$x$ dipaths $Q \in \mathcal{P}$ in case $x \neq y$ and $k, \ell > 0$, and otherwise each dipath or dicyle in~$\mathcal{P}$ has length less than $n$.

The following theorem makes a similar statement as Theorem~\ref{thm:Kasper}.
However, the subdigraphs that are forced within each large enough strong digraph are not strong themselves in this theorem.

\begin{thm}\label{thm:Kasper_weak}
For every $n \in \mathbb{N}$ there exists an $N \in \mathbb{N}$ such that any strong digraph $D$ on at least $N$ vertices contains one of the following substructures:
\begin{enumerate}[label = (\arabic*)]
\item\label{itm:Kasper1weak} A dipath of length at least $n$.
\item\label{itm:Kasper2weak} An $n$-short $(n,0)$-system $\mathcal{P}$ of dipaths.
\end{enumerate}
\end{thm}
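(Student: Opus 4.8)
The goal is to show that a large strong digraph $D$ contains either a long dipath or an $n$-short $(n,0)$-system of dipaths. We already have Lemma~\ref{lem:high_deg}, which hands us either a long dipath (in which case we are done with outcome~\ref{itm:Kasper1weak}) or a vertex of large out-degree. So the plan is to assume we have a vertex $v$ of out-degree at least $m$, where $m$ is a suitably large function of $n$ (to be determined by iterating the argument below), and to build an $n$-short $(n,0)$-system from the $m$ out-neighbours of $v$ together with the fact that $D$ is strong.

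**Key steps.** First, from strong connectivity, every one of the $m$ out-neighbours $u_1,\dots,u_m$ of $v$ can reach $v$ again via some dipath; pick for each $u_i$ a dipath $Q_i$ from $u_i$ back to $v$. Concatenating the edge $vu_i$ with $Q_i$ gives a closed diwalk through $v$, hence a dicycle $C_i$ through $v$. If some $C_i$ is long, we extract a long dipath and are done. Otherwise all the $C_i$ are short, so we are in the situation of many short dicycles through a common vertex $v$ — but they may share many vertices, so they do not yet form an internally disjoint system. The main work is a cleaning/pruning argument: among the $m$ short closed dipaths through $v$, find $n$ of them that are pairwise internally disjoint. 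The idea is that each $C_i$ has fewer than (say) $n$ vertices, and if two of them, $C_i$ and $C_j$, share an internal vertex $w$, then one can reroute — follow $C_i$ from $v$ to the first common vertex $w$, then follow $C_j$ from $w$ back to $v$ — either obtaining a shorter closed dipath or, by a counting/greedy argument over which vertices are "used," discarding only boundedly many $C_j$ per conflict. Concretely I would order the $C_i$ by, e.g., length or by a fixed enumeration, greedily select them one at a time, and each time we select $C_i$ we delete all $C_j$ ($j>i$) that meet $V(C_i)\setminus\{v\}$; since each selected $C_i$ has at most $n-2$ internal vertices and each such vertex lies on a bounded number of the remaining $C_j$ — this is where care is needed, as a priori a single vertex could lie on all of them. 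To fix that, I would instead pass to a subfamily where the closed dipaths are as internally disjoint as possible: if a vertex $w\neq v$ lies on two of the chosen closed dipaths, splice to get a closed dipath avoiding one side, and argue this process terminates with at least $n$ internally disjoint ones provided we started with enough (a Ramsey-type or sunflower-type bound: $m$ sets each of size $<n$ either contain $n$ pairwise disjoint ones, after removing the common element $v$, or there is an element common to many — but that element together with $v$ yields two internally-disjoint-through-$v$ arcs giving short cycles, iterate). Once we have $n$ pairwise internally disjoint closed dipaths through $v$, each of length $<n$, splitting each at $v$ into a $v$--$u_i$ part of length $1$ and a $u_i$--$v$ part — actually simpler: each closed dipath through $v$ is itself a dicycle, i.e.\ a $v$--$v$ dipath, and $n$ pairwise internally disjoint dicycles through $v$ form precisely an $n$-short $(n,0)$-system of $v$--$v$ dipaths (with $x=y=v$), which is outcome~\ref{itm:Kasper2weak}.

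**The main obstacle.** The genuine difficulty is the sunflower/disjointification step: turning "many short closed dipaths through $v$" into "$n$ pairwise internally disjoint short closed dipaths through $v$." Naively many of them could pile up on a shared vertex. The resolution is an iterative scheme: apply the sunflower lemma to the internal vertex sets; either we get $n$ with pairwise-disjoint cores (done), or there is a vertex $w$ lying on a large subfamily. In the latter case $w$ is reachable from $v$ and reaches $v$ along short internally disjoint routes (take two members of the subfamily), which lets us replace the role of $v$ by a pair and bootstrap — but cleaner is to observe that if $w$ is common to $\ge 2$ closed dipaths then there is a closed dipath through $v$ \emph{not} through $w$ obtainable by rerouting, or else $w$ is a cut vertex and we recurse on a strictly smaller quantity. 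I expect the bookkeeping here — choosing $m$ as a tower-type function of $n$ so that $\lfloor m/(n-1)!\rfloor$ or the relevant sunflower bound still exceeds $n$ after the recursion bottoms out — to be the technical heart of the argument, while everything else (invoking Lemma~\ref{lem:high_deg}, extracting a long dipath from a long cycle, recognizing an $(n,0)$-system) is routine.
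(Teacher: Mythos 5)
There is a genuine gap at the step you yourself flag as the technical heart: turning many short closed dipaths through $v$ into $n$ pairwise internally disjoint ones. The target structure you aim for --- an $(n,0)$-system with $x=y=v$, i.e.\ $n$ internally disjoint dicycles through $v$ --- simply need not exist, no matter how large the out-degree of $v$ is. For instance, let $v$ have out-neighbours $u_1,\dots,u_m$, let each $u_i$ send an edge to a single vertex $w$, and let $w$ send the only edge back to $v$. This digraph is strong, has no dipath of length $4$, and every dicycle through $v$ passes through $w$, so no two of them are internally disjoint. Your proposed rescues do not survive this example: the claim that ``if $w$ is common to $\ge 2$ closed dipaths then there is a closed dipath through $v$ not through $w$ obtainable by rerouting'' is false here, the alternative ``$w$ is a cut vertex and we recurse on a strictly smaller quantity'' never specifies what is recursed on or why it terminates, and a sunflower with nonempty core is exactly the case you cannot dispose of. The correct conclusion in this example is an $(n,0)$-system of $v$--$w$ dipaths, i.e.\ with $x\neq y$; by insisting on $x=y=v$ you have locked yourself out of the outcome the theorem actually provides.

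The missing ideas, which the paper's proof supplies, are: (a) fix a target $z$ and apply Menger's theorem between $N^+(v)$ and $z$ --- either you get $n$ disjoint $N^+(v)$--$z$ dipaths (prepending the edges from $v$ yields the $(n,0)$-system of $v$--$z$ dipaths, automatically $n$-short since there is no dipath of length $n$), or there is a separator $S$ with $|S|<n$; and (b) a termination mechanism: since all relevant dipaths have length at most $n-2$ (again because there is no long dipath), pigeonholing onto a single vertex $s\in S$ met by at least a $1/(n-1)$ fraction of the paths and replacing $z$ by $s$ strictly shortens the paths, so after at most $n-3$ iterations either Menger succeeds or $n$ out-neighbours of $v$ reach the current target by dipaths of length at most $1$, which is itself an $(n,0)$-system. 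Your outline contains neither the Menger dichotomy (which is what converts ``a vertex common to many paths'' from an obstruction into progress) nor a quantity that provably decreases, so as written the argument does not close.
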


\begin{proof}
Suppose there does not exist a dipath of length at least $n$ in $D$.
By Lemma~\ref{lem:high_deg} we now choose $N \in \mathbb{N}$ so that the existence of a vertex $v \in V(D)$ with out-degree at least $\ell := n(n-1)^{n-3}$ is guaranteed.
Let $z \in V(D) \setminus N^+(v)$ and $P_w$ be a shortest dipath from $w$ to $z$ in $D$ for every $w \in N^+(v)$.
Furthermore we set $\mathcal{P}' = \{ P_w \mid w \in N^+(v) \}$.
Note that $z$ might also be equal to $v$ since $D$ is strong.
If there exists no $S \subseteq V(D)$ of size smaller than $n$ separating $N^+(v)$ from $z$, then there exists the desired $(n,0)$-system $\mathcal{P}$ of $v$--$z$ dipaths by Menger's Theorem.
Hence, we may assume the existence of such a set $S$.
Note that every $N^+(v)$--$S$ dipath which does not use $v$ has length at most $n-2$ as otherwise we would have a dipath of length at least $n$ from $v$ to $z$ (or to a predecessor on a dipath in case $v = z$).
By the pigeonhole principle, there exists a vertex $s \in S$ such that a set $\mathcal{P}_1 \subseteq \mathcal{P}'$ of size at least $\lceil \frac{\ell}{n-1} \rceil$ exists all whose dipaths meet $s$.
Let $D_1$ denote the digraph induced by all initial segments of dipaths of $\mathcal{P}_1$ up to the vertex $s$.
We can now repeat the argument with $s$ instead of $z$ in $D_1$.
After iterating this argument at most $n-3$ times, we have either found the desired $(n,0)$-system of dipaths via Menger's Theorem or have a vertex $s^* \in V(D)$ which is reached from $\lceil \frac{\ell}{(n-1)^{(n-3)}} \rceil \geq n$ many vertices in $N^+(v)$ via dipaths whose length is at most $1$.
This, however, gives rise to an $(n,0)$-system of $v$--$s^*$ dipaths.
\end{proof}

Before we come to the proof of the main result of this section, Theorem~\ref{thm:Kasper}, we state another type of subdigraphs that are part of the statement of the main result.

A sequence $(C_1, C_2, \ldots, C_k)$ of dicycles is called a \emph{semi-chain} if ${V(C_i) \cap V(C_j) \neq \emptyset}$ if and only if $|i-j| = 1$ for all $i, j \in \mathbb{N}$ with $1 \leq i, j \leq k$.
A semi-chain $(C_1, C_2, \ldots, C_k)$ is called \emph{$n$-narrow} for some $n \in \mathbb{N}$ if $|V(C_i)| < n$ for every $i \in \{1, \ldots, k \}$.

Now we prove the main result of this section.

\begin{thm}\label{thm:Kasper}
For every $n, k \in \mathbb{N}$ there exists an $N \in \mathbb{N}$ such that any strong digraph $D$ on at least $N$ vertices contains one of the following substructures:
\begin{enumerate}[label = (\arabic*)]
\item\label{itm:Kasper1} A dicycle on at least $n$ vertices.
\item\label{itm:Kasper2} An $n$-narrow semi-chain $(C_1, C_2, \ldots, C_k)$.
\item\label{itm:Kasper3} An $n$-short $(m,1)$-system of dipaths where $m \geq (k-1)n + 3$.
\end{enumerate}
\end{thm}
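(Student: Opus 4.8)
The plan is to fix $n,k$, assume outcome~\ref{itm:Kasper1} fails (so $D$ contains no dicycle on at least $n$ vertices), and deduce~\ref{itm:Kasper2} or~\ref{itm:Kasper3}. Note that under this assumption the ``$n$-short'' requirement in~\ref{itm:Kasper3} is automatic: whenever a $y$--$x$ dipath $Q$ is internally disjoint from an $x$--$y$ dipath $P$, the union $P\cup Q$ is a dicycle and hence has fewer than $n$ vertices (and similarly in the case $x=y$). So for~\ref{itm:Kasper3} it suffices to produce $m\ge (k-1)n+3$ internally disjoint $x$--$y$ dipaths together with one further internally disjoint $y$--$x$ dipath, where $x=y$ is allowed.

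First I would invoke Theorem~\ref{thm:Kasper_weak} with a parameter $n'$ chosen large in terms of $n$ and $k$: if $|V(D)|$ is large enough, $D$ contains a dipath of length at least $n'$ or an $n'$-short $(n',0)$-system $\mathcal P$ of $x$--$y$ dipaths. In the latter case, if $x=y$ then $\mathcal P$ is a family of $n'$ internally disjoint dicycles through $x$, which is an $(n'-1,1)$-system of $x$--$x$ dipaths and yields~\ref{itm:Kasper3}. If $x\ne y$, pick a shortest $y$--$x$ dipath $Q$ (it exists since $D$ is strong). If $|V(Q)|\ge n'$ we are back in the first case with $Q$ as the long dipath; otherwise, since the members of $\mathcal P$ are pairwise internally disjoint, every internal vertex of $Q$ lies on at most one of them, so $Q$ shares an internal vertex with fewer than $n'$ of them. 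Discarding those leaves $m\ge (k-1)n+3$ dipaths of $\mathcal P$ each internally disjoint from $Q$, and together with $Q$ they form the required $(m,1)$-system.

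It remains to handle a long dipath $P=v_0v_1\cdots v_L$ with $L$ huge. For each $i\in\{1,\dots,L\}$ use strong connectivity to pick a shortest $v_i$--$v_{i-1}$ dipath $R_i$ in $D$; then $C_i:=R_i$ together with the arc from $v_{i-1}$ to $v_i$ is a dicycle, so $|V(C_i)|<n$, and $v_i\in V(C_i)\cap V(C_{i+1})$, so the $C_i$ form a chain. The crucial step is a bandwidth estimate: there is a constant $n^\ast=n^\ast(n)$ with $V(C_i)\cap V(C_j)=\emptyset$ whenever $|i-j|\ge n^\ast$. Indeed, a common vertex of $C_i$ and $C_j$ lies on $R_i$ and on $R_j$, both of length $<n$, and hence gives a dipath $S$ of length $<2n$ between $\{v_{i-1},v_i\}$ and $\{v_{j-1},v_j\}$; combining $S$ with the $P$-segment joining its endpoints and then repeatedly replacing a portion of the current shortcut that re-enters that $P$-segment by the corresponding piece of $P$, one reaches a simple dicycle. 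Since each of the at most $2n$ such replacements decreases the relevant span on $P$ by less than $n$, we obtain a dicycle on at least $n$ vertices — contradicting our assumption — unless $|i-j|<2n^2=:n^\ast$. Given this, put $\beta(i):=\max\{\,j:V(C_j)\cap V(C_i)\ne\emptyset\,\}$, so $i+1\le\beta(i)<i+n^\ast$, and define $i_1:=1$, $i_{l+1}:=\beta(i_l)$. Then $C_{i_l}$ meets $C_{i_{l+1}}$ (the maximum is attained) while $C_{i_l}$ is disjoint from $C_{i_m}$ for every $m\ge l+2$ (since then $i_m>i_{l+1}=\beta(i_l)$), so $(C_{i_1},\dots,C_{i_k})$ is an $n$-narrow semi-chain, i.e.~outcome~\ref{itm:Kasper2}. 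Running this greedily for $k$ steps needs $L>1+(k-1)n^\ast$, which also fixes the threshold used above and hence the final value of $N$.

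The main obstacle is exactly this bandwidth estimate in the long-dipath case: making the ``shortcut-unfolding'' precise — choosing, at each step, which intersection point with the $P$-segment to cut at so that what remains is genuinely a simple dicycle of controlled length — is the technical core, and one then has to fit the constants $n'$, $n^\ast$ and $N$ together consistently across the three cases. The bound $m\ge(k-1)n+3$ in~\ref{itm:Kasper3} is merely what survives after discarding the fewer than $n'$ dipaths of $\mathcal P$ blocked by $Q$; the argument in fact delivers a much wider $(m,1)$-system.
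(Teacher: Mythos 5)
Your overall skeleton matches the paper's: assume there is no dicycle on $n$ vertices, apply Theorem~\ref{thm:Kasper_weak}, and split into a long-dipath case and a dipath-system case. In the system case, however, your accounting has a genuine gap. You take $\mathcal P$ of size $n'$ and discard the members that $Q$ meets internally, bounding the number discarded by the number of internal vertices of $Q$, i.e.\ by ``fewer than $n'$''; but then only \emph{at least one} member of $\mathcal P$ is guaranteed to survive, not $(k-1)n+3$, and no choice of $n'$ repairs this since the loss scales with $n'$. The missing step (which the paper makes) is a bootstrap: once at least one $P^*\in\mathcal P$ is internally disjoint from $Q$, the dicycle $Q\cup P^*$ has fewer than $n$ vertices, so $Q$ has length at most $n-2$ and hence at most $n-3$ internal vertices; only then does discarding leave at least $n'-(n-3)=(k-1)n+3$ dipaths (with $n'=nk$). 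You state the relevant dicycle observation at the outset to justify $n$-shortness, but you never feed it back into the count, so as written the bound on $m$ does not follow.

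In the long-dipath case you genuinely diverge from the paper, and your route is sound. The paper builds only about $k$ dicycles, greedily from the far end of $P$, choosing each return dipath to land as early on $P$ as possible and then to use as few edges off $P$ as possible, and verifies the semi-chain property by a delicate case analysis on these minimality choices. You instead attach a short return dipath (hence a dicycle $C_i$ on fewer than $n$ vertices) to every edge of $P$ and prove a bandwidth bound $V(C_i)\cap V(C_j)=\emptyset$ for $|i-j|\ge 2n^2$: a common vertex yields a walk of length less than $2n$ from $v_j$ back to $v_{i-1}$, and decomposing it into its excursions off the segment $v_{i-1}Pv_j$ --- each backward excursion spans fewer than $n$ positions, since otherwise it closes up with a piece of $P$ into a forbidden long dicycle, and there are fewer than $2n$ excursions --- bounds the total backward displacement by $2n^2$. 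The greedy extraction via $\beta$ then cleanly yields the semi-chain. This is more modular than the paper's argument (the no-long-dicycle hypothesis enters only through the bandwidth lemma, and the extraction is purely combinatorial), at the price of a quadratically worse threshold ($L\sim kn^2$ versus the paper's $nk$) and of having to write out the unfolding carefully, which you correctly identify as the technical core. So: fix the counting in the system case; the rest is a valid, genuinely different proof of the same statement.
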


During the preparation of this article, we noticed that Theorem~\ref{thm:Kasper} is equivalent to a result in a recent preprint of Reich~\cite{R2024+arxiv}*{Corollary~1.2}.
However, our proof is much shorter and more straightforward, since the paper~\cite{R2024+arxiv} focuses on a more general version in terms of butterfly minors.

\begin{proof}[Proof of Theorem~\ref{thm:Kasper}.]
Let $n, k \in \mathbb{N}$ be fixed, and let $D$ be a strong digraph without dicycles of length at least~$n$.
By Theorem~\ref{thm:Kasper_weak} there exists an $N \in \mathbb{N}$ such that if $D$ has at least $N$ vertices, then it either contains a dipath of length $nk$ or two vertices $x, y$ together with an $nk$-short $(nk,0)$-system of $x$--$y$ dipaths. Corresponding to this, we now distinguish two cases.

\begin{case}\label{case:long_path}
$D$ contains a dipath $P$ of length $nk$.
\end{case}

Let $P = p_1p_2 \ldots p_{nk}$ and define $v_1 = p_{nk}$.
Let $Q_1$ be a dipath that is internally disjoint from $P$ from $v_1$ to a vertex $p_i =: v_2$ such that $i \in \mathbb{N}$ is as small as possible, and with respect to these properties $Q_1$ is as short as possible.
Such a dipath exists since $D$ is strong.
Since $D$ does not contain a dicycle of length at least $n$, we know that $i \geq nk - n+2$.
Now we set $C_1 = v_2Pv_1 \cup Q_1$, which is a dicycle of length less than $n$.
Since $C_1$ does not cover $V(P)$, we similarly repeat the construction of dicycles along $P$:
Let $Q_2$ be a dipath that is internally disjoint from $p_1Pv_2$ from $v_2$ to a vertex $p_j =: v_3$ such that $j \in \mathbb{N}$ is as small as possible, and with respect to these properties $Q_2$ has as few edges outside of $E(P)$ as possible.
As before, set $C_2 = v_3Pv_2 \cup Q_2$, which is another dicycle of length less than $n$.
This process is repeated until the sequence of constructed dicycles $C_1, C_2, \ldots, C_{\ell}$ covers $V(P)$.
Since each dicycle contains at most $n-1$ vertices of $P$, we know that $\ell \geq k$ holds.

We claim that $(C_1, C_2, \ldots, C_{\ell})$ is a semi-chain of dicycles.
By construction, we know that $C_i$ intersects $C_{i-1}$ and $C_{i+1}$ for every $i \in \mathbb{N}$ with $1 < i < \ell$.
Now suppose for a contradiction that $C_i$ and $C_j$ intersect for $i, j \in \mathbb{N}$ with $1 \leq i < j-1 < \ell$.
Without loss of generality, let us assume that $i$ is as big as possible with respect to this property and fixed $j$.
Note that $C_i$ and $C_j$ either intersect in $Q_i \cap Q_j$ or in $(v_{i+1}Pv_i) \cap Q_j$.
Let $c$ be the last vertex on $Q_j$ that lies in $C_i \cap C_j$.
If $c \notin V(v_{i+1}Pv_i)$, then we get a contradiction to the choice of $v_{i+1}$ since $(v_iQ_ic) \cup (cQ_jv_{j+1})$ contains a dipath from $v_i$ to a vertex on $P$ before $v_{i+1}$ that is internally disjoint from $p_1Pv_{v_i}$.
Hence, we know that $c \in V(v_{i+1}Pv_i)$.
By the choice of $i$, we know that $Q_j$ is disjoint from $C_m$ for $i < m < j-1$.
Suppose for a contradiction that $cQ_jv_{j+1}$ intersects $v_jPv_{j-1}$ before $v_{j-1}$, say in a vertex $w$.
Then we have a contradiction to the choice of $Q_j$ since the dipath $(v_jPw) \cup (wQ_jv_{j+1})$ would be a valid choice instead of $Q_j$, but has fewer edges outside of $P$ than $Q_j$.
Hence $cQ_jv_{j+1}$ does not intersect $v_jPv_{j-1}$ before $v_{j-1}$.
Now, however, we get a contradiction to the choice of $v_{i+2}$ since the dipath $(v_{i+1}Pc) \cup (cQ_jv_{j+1})$ is a valid choice for the dipath $Q_{i+1}$, but ends in $v_{j+1}$, which lies before $v_{i+2}$ on $P$.
So, $C_i$ and $C_j$ must be disjoint, which completes the proof of the claim that $(C_1, C_2, \ldots, C_{\ell})$ is a semi-chain of dicycles.

By the definition of the dicycles $C_i$, we immediately get that the semi-chain of dicycles $(C_1, C_2, \ldots, C_{\ell})$ is $n$-narrow.
This completes the proof under the assumption of Case~\ref{case:long_path}.

\begin{case}\label{case:nolong_path}
$D$ contains no dipath of length $nk$, but two vertices $x, y$ together with an $nk$-short $(nk,0)$-system $\mathcal{P}$ of $x$--$y$ dipaths.
\end{case}

In case $x=y$, we obtain the desired $n$-short $(m,1)$-system of dipaths immediately for $n \geq 3$, and for $n \leq 2$ we trivially have a dicycle of desired length since $D$ is strong.
So let us assume that $x \neq y$.
Using again that $D$ is strong, we can find a dipath $Q$ from $y$ to $x$.
By assumption, $Q$ has length less than $nk$.
Hence, $Q$ is internally disjoint to at least one dipath $P^* \in \mathcal{P}$.
But this implies that $Q$ has length at most $n-2$, as otherwise $Q \cup P^*$ would be a dicycle of length at least~$n$.
Thus, $Q$ can intersect at most $n-3$ dipaths of $\mathcal{P}$ in interior vertices.
Hence, there exist a set $\mathcal{P}' \subseteq \mathcal{P}$ containing at least $n(k-1) + 3$ many dipaths that are internally disjoint to $Q$.
As $D$ does not contain any dicycle of length at least $n$, we know that each dicycle $P' \cup Q$ for every $P' \in \mathcal{P}'$ has length less than $n$.
Therefore, $\mathcal{P}' \cup \{ Q \}$ is an $n$-short $(m,1)$-system of $x$--$y$ dipaths where $m \geq (k-1)n + 3$.
\end{proof}

\section{A grid theorem for thick ends}
\label{sec:subdivision}

Before we state the main result of this section, we need to define some digraphs.
The first digraph is built from infinitely many pairwise disjoint rays $R_1=x_1^1x_2^1\ldots$, $R_2=x_1^2x_2^2\ldots$, $\ldots$ where we add edges $x_{4j+7}^ix_{4j+1}^{i+1}$ and $x_{4j+2}^{i+1}x_{4j+8}^i$ for all $j\geq 0$ and $i\geq 1$.
Finally, we suppress all vertices $v\in V(R_i)$ with $d^-(v) = d^+(v) = 1$ that have only neighbours on~$R_i$.
We call the resulting digraph the \emph{bidirected quarter-grid}.
See Figure~\ref{fig:D_1} for a picture of the bidirected quarter-grid.
Furthermore, we call the digraph obtained from the bidirected quarter-grid after reversing all orientations of the edges the \emph{reversed bidirected quarter-grid}.

\begin{figure}[ht]
    \centering

    \begin{subfigure}[b]{0.38\linewidth}    
    \centering
\begin{tikzpicture}
    \draw[-{Latex[length=2.75mm]}] (0,0) -- (0,5.25);
    \foreach \y in {0,0.5,1,1.5,2,2.5,3,3.5,4,4.5}
        \fill (0,\y) circle (2pt);

    \draw (0,-0.75) node[anchor=south] {$R_1$};

    \draw[-{Latex[length=2.75mm]}] (1,0) -- (1,5.25);
    \foreach \y in {0,0.5,1,1.5,2,2.5,3,3.5,4,4.5}
        \fill (1,\y) circle (2pt);
   
    \draw (1,-0.75) node[anchor=south] {$R_2$};

    \draw[-{Latex[length=2.75mm]}] (2,1.5) -- (2,5.25);
    \foreach \y in {1.5,2,2.5,3,3.5,4,4.5}
        \fill (2,\y) circle (2pt);

    \draw (2,0.75) node[anchor=south] {$R_3$};

    \draw[-{Latex[length=2.75mm]}] (3,3) -- (3,5.25);
    \foreach \y in {3,3.5,4,4.5}
        \fill (3,\y) circle (2pt);
    
    \draw (3,2.25) node[anchor=south] {$R_4$};

    \foreach \y in {0,1,2,3,4}
    \draw[-{Latex[length=2.75mm]}] (0,\y) -- (1,\y);

    \foreach \y in {1.5,2.5,3.5,4.5}
    \draw[-{Latex[length=2.75mm]}] (1,\y) -- (2,{\y});

    \foreach \y in {3,4}
    \draw[-{Latex[length=2.75mm]}] (2,\y) -- (3,{\y});

    \draw[-{Latex[length=2.75mm]}] (1,0.5) -- (0,0.5);
    \draw[-{Latex[length=2.75mm]}] (2,2) -- (0,1.5);
    \draw[-{Latex[length=2.75mm]}] (3,3.5) -- (0,2.5);
    
    \draw[-{Latex[length=2.75mm]}] (1.6,5.25) -- (0,4.5);
    \draw[-{Latex[length=2.75mm]}] (3.6,4.9) -- (0,3.5);

    \draw[-{Latex[length=2.75mm]}] (3,4.5) -- (3.75,4.5);

    \draw (0.5,5.25);
    \fill (0.5,5.25) circle (1pt);
    \draw (0.5,5.5);
    \fill (0.5,5.5) circle (1pt);
    \draw (0.5,5.75);
    \fill (0.5,5.75) circle (1pt);    

    \draw (2.5,5.25);
    \fill (2.5,5.25) circle (1pt);
    \draw (2.5,5.5);
    \fill (2.5,5.5) circle (1pt);
    \draw (2.5,5.75);
    \fill (2.5,5.75) circle (1pt);    

    \draw (4,5);
    \fill (4,5) circle (1pt);
    \draw (4.25,5);
    \fill (4.25,5) circle (1pt);
    \draw (4.5,5);
    \fill (4.5,5) circle (1pt);    

\end{tikzpicture}

    \caption{An ascending cyclically directed quarter-grid.}
    \label{subfig:D_2_1}
        \end{subfigure}
       \hspace{3em}
\begin{subfigure}[b]{0.38\linewidth}
    
\centering
\begin{tikzpicture}
    \draw[-{Latex[length=2.75mm]}] (0,-0.5) -- (0,4.25);
    \foreach \y in {-0.5,0,0.5,1,1.5,2,2.5,3,3.5}
        \fill (0,\y) circle (2pt);

    \draw (0,-1.25) node[anchor=south] {$R_1$};

    \draw[-{Latex[length=2.75mm]}] (-1,0) -- (-1,4.25);
    \foreach \y in {0,0.5,1,1.5,2,2.5,3,3.5}
        \fill (-1,\y) circle (2pt);
   
    \draw (-1,-0.75) node[anchor=south] {$R_2$};

    \draw[-{Latex[length=2.75mm]}] (-2,0.5) -- (-2,4.25);
    \foreach \y in {0.5, 1, 1.5,2,2.5,3,3.5}
        \fill (-2,\y) circle (2pt);

    \draw (-2,-0.25) node[anchor=south] {$R_3$};

    \draw[-{Latex[length=2.75mm]}] (-3,1) -- (-3,4.25);
    \foreach \y in {1, 1.5, 2, 2.5, 3,3.5}
        \fill (-3,\y) circle (2pt);
    
    \draw (-3,0.25) node[anchor=south] {$R_4$};

    \foreach \y in {0}
    \draw[-{Latex[length=2.75mm]}] (0,\y) -- (-1,\y);
    \foreach \y in {0.5,1.5,2.5,3.5}
    \draw[-{Latex[length=2.75mm]}] (-1,\y) -- (0,\y);

    \foreach \y in {1,2,3}
    \draw[-{Latex[length=2.75mm]}] (-2,\y) -- (-1,{\y});

    \foreach \y in {1.5,2.5,3.5}
    \draw[-{Latex[length=2.75mm]}] (-3,\y) -- (-2,{\y});

    \draw[-{Latex[length=2.75mm]}] (0,1) -- (-2,0.5);
    \draw[-{Latex[length=2.75mm]}] (0,2) -- (-3,1);
    \draw[-{Latex[length=2.75mm]}] (0,3) -- (-3.75,1.6);
    \draw[-{Latex[length=2.75mm]}] (0,4) -- (-3.75,2.5);

    \foreach \y in {2,3}
    \draw[-{Latex[length=2.75mm]}] (-3.75,\y) -- (-3,\y);

    \draw (-0.5,4.25);
    \fill (-0.5,4.25) circle (1pt);
    \draw (-0.5,4.5);
    \fill (-0.5,4.5) circle (1pt);
    \draw (-0.5,4.75);
    \fill (-0.5,4.75) circle (1pt);    

    \draw (-2.5,4.25);
    \fill (-2.5,4.25) circle (1pt);
    \draw (-2.5,4.5);
    \fill (-2.5,4.5) circle (1pt);
    \draw (-2.5,4.75);
    \fill (-2.5,4.75) circle (1pt);    

    \draw (-4,2.75);
    \fill (-4,2.75) circle (1pt);
    \draw (-4.25,2.75);
    \fill (-4.25,2.75) circle (1pt);
    \draw (-4.5,2.75);
    \fill (-4.5,2.75) circle (1pt);    

    \draw (-4,4);
    \fill (-4,4) circle (1pt);
    \draw (-4.25,4);
    \fill (-4.25,4) circle (1pt);
    \draw (-4.5,4);
    \fill (-4.5,4) circle (1pt);    

\end{tikzpicture}
    \caption{A descending cyclically directed quarter-grid.}
 \label{subfig:D_2_2}

        \end{subfigure}
    \caption{The cyclically directed quarter-grids.}
    \label{fig:D_2}
\end{figure}
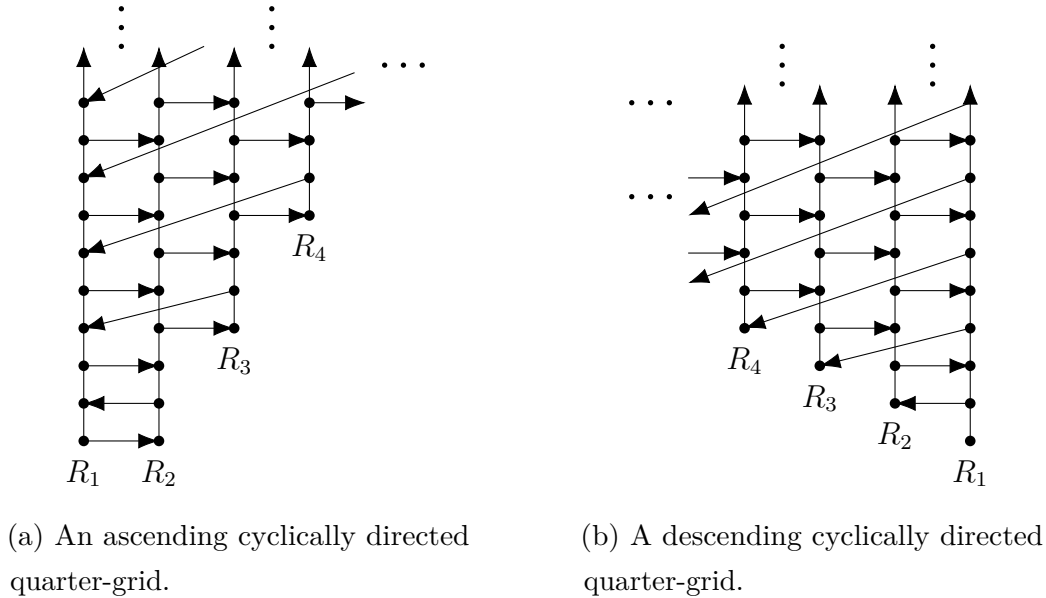

In order to define two further digraphs, we start again with infinitely many pairwise disjoint rays $R_1=x_1^1x_2^1\ldots$, $R_2=x_1^2x_2^2\ldots$, $\ldots$.
For the first one, we add the following edges: $x_j^1x_j^2$ for all odd~$j$, $x_{j+3}^ix_j^{i+1}$ for all $i\geq 2$ and all odd $j$, and $x_2^ix_{2(i-1)}^1$ for all $i\geq 2$.
This digraph is called the \emph{ascending cyclically directed quarter-grid}.
For the second one, we add the following edges: $x_{j}^{i+1}x_{j+1}^i$ for all even~$j$ and all $i\geq 1$, and $x_{2i}^1x_1^{i+1}$ for all $i\geq 1$.
We call this digraph the \emph{descending cyclically directed quarter-grid}.
We may refer to both of these digraphs simply as \emph{cyclically directed quarter-grids}.
See Figure~\ref{fig:D_2} for pictures of both cyclically directed quarter-grids.

A \emph{complete ray digraph} on a set of infinitely many pairwise disjoint rays rays $R_1=x_1^1x_2^1\ldots$, $R_2=x_1^2x_2^2\ldots$, $\ldots$ is a digraph that is obtained by the rays $R_i$ together with infinitely many disjoint dipaths in both directions between every two of them such that these dipaths do not meet any other ray~$R_j$, such that all these additional dipaths are pairwise disjoint and such that the starting vertex of $R_i$ is the end vertex of an $R_1$--$R_i$ dipath.

In the following lemma we prove that the previously defined digraphs are all one-ended.

\begin{lemma}\label{lem:one-ended}
    The bidirected quarter-grid, the cyclically directed quarter-grids and the complete ray digraphs have each precisely one end.
\end{lemma}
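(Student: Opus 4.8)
The plan is to treat each of the three digraphs $D$ in turn and to show (i) that $D$ has at least one end --- clear, since $D$ contains the rays $R_1,R_2,\dots$ --- and (ii) that all rays and anti-rays of $D$ lie in a single end.

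First I would check that $R_i\sim R_j$ for all $i,j$. For the bidirected quarter-grid this is immediate: between $R_i$ and $R_{i+1}$ the single cross-edges $x^i_{4j+7}x^{i+1}_{4j+1}$ (resp.\ $x^{i+1}_{4j+2}x^i_{4j+8}$) form infinitely many pairwise disjoint dipaths in one direction (resp.\ in the other), and these edges are not affected by the suppression, which only removes vertices whose neighbours all lie on a single $R_i$; hence $R_i\le R_{i+1}$ and $R_{i+1}\le R_i$, and as $\le$ is a partial order, $R_i\sim R_j$. For a complete ray digraph the required disjoint dipath families are part of the definition. For the cyclically directed quarter-grids a routine inspection of the prescribed edges gives $R_1\le R_i$ and $R_i\le R_1$ for each $i$ (for the ascending one, the second inequality is obtained by routing out of $R_i$ up through the higher rays and then back to $R_1$ along the prescribed ``downward'' edges, keeping the routes disjoint by spacing them out). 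In each case let $\omega$ denote the common end of the $R_i$.

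It then remains to show that an arbitrary ray or anti-ray $Z$ of $D$ lies in $\omega$, i.e.\ that $Z\sim R_1$. Since every vertex of $D$ lies on some $R_i$ (except possibly on one of the connecting dipaths of a complete ray digraph, each of which is finite and meets the rays only in its endpoints), $Z$ has infinitely many vertices in $\bigcup_i V(R_i)$. If some $R_i$ contains infinitely many vertices of $Z$, then those common vertices already form infinitely many pairwise disjoint single-vertex $Z$--$R_i$ and $R_i$--$Z$ dipaths, so $Z\sim R_i\sim R_1$ and we are done. Otherwise $Z$ meets each $R_i$ only finitely often, hence leaves each $R_i$ for good after finitely many steps; then $Z$ meets infinitely many distinct rays and one can list vertices $w_1,w_2,\dots$ of $Z$, in the order in which they occur along $Z$, with $w_\ell\in V(R_{j_\ell})$ and $j_1<j_2<\cdots$.

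The main obstacle is this last case. To prove $Z\le R_1$ and $R_1\le Z$ I would invoke the standard fact that, for two rays or anti-rays $Q,R$, one has $Q\le R$ as soon as no finite vertex set separates a tail of $Q$ from a tail of $R$ in the relevant direction (greedily extend a partial system of pairwise disjoint $Q$--$R$ dipaths). It then suffices to produce, for every finite $F\subseteq V(D)$, a dipath in $D-F$ from a tail of $R_1$ to a tail of $Z$ and one in the opposite direction. Given $F$, choose $\ell$ so large that $R_{j_\ell}$, $w_\ell$ and the relevant connecting structure all avoid $F$; for a complete ray digraph an $F$-avoiding $R_1$--$R_{j_\ell}$ dipath (one exists among the infinitely many disjoint ones) prolonged along $R_{j_\ell}$ to a vertex of $Z$ does the job, and symmetrically in the other direction, while for the quarter-grids one routes from $R_1$ to $R_{j_\ell}$ through the intermediate rays, choosing one cross-edge per level far enough out to miss $F$. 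The bookkeeping that such routes always exist avoiding a prescribed finite set is the only real subtlety. A convenient simplification for the two quarter-grids is that they have no anti-ray at all: for the bidirected quarter-grid the function $x^i_k\mapsto k+6i$ strictly increases along every horizontal edge and is constant along every cross-edge, and each vertex is incident with at most one cross-edge, so no anti-ray can be infinite; analogous potential functions handle the cyclically directed quarter-grids. For those two digraphs the whole argument thus only needs to be run for rays.
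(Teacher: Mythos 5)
Your argument follows essentially the same route as the paper's: the $R_i$ are pairwise equivalent, and an arbitrary ray either meets some $R_i$ infinitely often (done at once) or meets infinitely many of them, in which case one routes between $R_1$ and the ray through the grid structure while avoiding a prescribed finite set; your ``no finite separator'' criterion is the standard greedy reformulation and is fine. You are in fact more careful than the paper in one respect, namely in addressing anti-rays at all (ends are classes of rays \emph{and} anti-rays, and the paper's proof considers only rays). Your potential $x^i_k\mapsto k+6i$ for the bidirected quarter-grid is correct and survives the suppression step.

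The step that fails as written is the claim that ``analogous potential functions handle the cyclically directed quarter-grids.'' For the ascending one, any integer potential $\phi$ that is non-decreasing along cross-edges and strictly increasing along ray edges must satisfy $\phi(x^1_{2i-2})\ge\phi(x^1_{2i-3})+(i-1)$ for all $i$: follow $x^1_{2i-3}\to x^2_{2i-3}$, then the staircase of edges $x^{\ell}_{j+3}\to x^{\ell+1}_{j}$ (with one ray edge per level) up to $x^i_1$, the ray edge to $x^i_2$, and finally $x^i_2\to x^1_{2(i-1)}$. Hence $\phi$ grows super-linearly along $R_1$, and no formula of the separable shape $f(j)+c_i$ --- in particular nothing analogous to $k+6i$ --- can satisfy both the staircase constraints (which force $c_{\ell+1}\ge c_\ell+3\alpha$ for the index-coefficient $\alpha$) and the return-edge constraints $\phi(x^i_2)\le\phi(x^1_{2(i-1)})$; a similar computation kills the descending case. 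Since (for digraphs of finite in-degree) the existence of such a potential is \emph{equivalent} to the absence of anti-rays, asserting one exists is not a simplification but the whole claim. The conclusion you want is nevertheless true: in any infinite backward walk the edges $x^i_2\to x^1_{2(i-1)}$ must be used infinitely often, and a short bookkeeping argument shows the level $i$ at consecutive uses strictly decreases, a contradiction; alternatively, just run your general two-case argument for anti-rays too. A second, smaller soft spot: ``choosing one cross-edge per level far enough out to miss $F$'' conflicts with having to land on $R_{j_\ell}$ \emph{before} $w_\ell$ when $Z$ hits the rays near their starting vertices; the fix (which the paper uses) is to take $j_\ell$ large and use the tight disjoint $R_1$--$x^{j_\ell}_1$ staircases, whose vertices all have large potential and hence avoid $F$.
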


\begin{proof}
    We shall only prove the statement for the bidirected quarter-grid since the proofs for the other digraphs are very similar.
    Obviously, $R \leq R_1$ holds for every ray $R$ in the bidirected quarter-grid.
    For the converse, note that $R_1 \leq R_i$ holds for every $i \geq 1$.
    Hence, if a ray $R$ contains infinitely many vertices from some $R_i$, we immediately get that $R_1 \leq R$.
    Therefore, we may assume that $R$ contains vertices from $R_i$ for infinitely many $i \in \mathbb{N}$.
    Now note that for each $i \in \mathbb{N}$ there exist an $R_1$--$x^i_1$ dipath such that all these dipaths are pairwise disjoint.
    From this it can easily deduced that $R_1 \leq R$ holds.    
\end{proof}

We continue with the definition of another auxiliary digraph.
Let $D$ be a digraph and $\mathcal R$ be a set of disjoint rays in~$D$.
The \emph{auxiliary ray digraph} $D_{\mathcal R}$ has $\mathcal R$ as its vertex set and an edge from $R_1$ to~$R_2$ if there are infinitely many disjoint $R_1$--$R_2$ dipaths that do not meet any ray of $\mathcal R\sm\{R_1,R_2\}$.
If $\mathcal R\subseteq\omega$ for an end $\omega$ of~$D$, then we also call it an \emph{auxiliary $\omega$-ray digraph}.
Note that if $\mathcal R$ consists of finitely many pairwise equivalent rays, then $D_{\mathcal R}$ is strong.

Now we state the main result of this section, from which we later deduce Theorem~\ref{thm:planarGridIntro}.

\begin{thm}\label{thm:planarGrid}
Let $D$ be a digraph with an end $\omega$ that contains infinitely many disjoint rays $R_1,R_2,\dots$.
Then the following holds.
\begin{enumerate}[label = \rm (\roman*)]
\item\label{itm:planarGrid1} The digraph $D$ contains a subdivision of the bidirected quarter-grid all of whose rays are in~$\omega$, if and only if the auxiliary $\omega$-ray digraphs on $R_1,\ldots,R_i$ for all $i\in\mathbb N$ contain semi-chains of unbounded size.
\item\label{itm:planarGrid2} The digraph $D$ contains a subdivision of the cyclically directed quarter-grid all of whose rays are in~$\omega$, if and only if the auxiliary $\omega$-ray digraphs on $R_1,\ldots,R_i$ for all $i\in\mathbb N$ contain cycles of unbounded length.
\item\label{itm:planarGrid3} The digraph $D$ contains a subdivision of the complete ray digraph all of whose rays are in~$\omega$, if and only if the auxiliary $\omega$-ray digraphs on $R_1,\ldots,R_i$ for all $i\in\mathbb N$ contain $(m,1)$-systems of dipaths for all~$m$.
\end{enumerate}

Additionally, the finite auxiliary $\omega$-ray digraphs on $R_1,\ldots,R_i$ for all $i\in\mathbb N$ meet the conditions of one of \ref{itm:planarGrid1}--\ref{itm:planarGrid3}. 
\end{thm}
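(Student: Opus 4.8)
The plan is to prove each equivalence \ref{itm:planarGrid1}--\ref{itm:planarGrid3} by splitting it into its two directions, and to obtain the concluding assertion directly from Theorem~\ref{thm:Kasper}. For the ``only if'' direction, suppose $D$ contains a subdivision $G$ of the relevant digraph --- the bidirected quarter-grid for \ref{itm:planarGrid1}, a cyclically directed quarter-grid for \ref{itm:planarGrid2}, a complete ray digraph for \ref{itm:planarGrid3} --- with all rays in $\omega$, and write $R_1',R_2',\dots$ for the subdivided copies of the rays $R_1,R_2,\dots$ used in the definition of that digraph. For each $k$ put $\mathcal R_k:=\{R_1',\dots,R_k'\}$, a set of pairwise disjoint rays in $\omega$, so that $D_{\mathcal R_k}$ is an auxiliary $\omega$-ray digraph. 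A short case analysis --- in the cyclic case one has to observe that the spiral back-edges of the grid combine into infinitely many pairwise disjoint dipaths from $R_k'$ to $R_1'$ avoiding $R_2',\dots,R_{k-1}'$ --- shows that $D_{\mathcal R_k}$ contains, respectively: the semi-chain whose $i$-th member is the $2$-dicycle on $\{R_i',R_{i+1}'\}$ for $i<k$; a dicycle through all of $R_1',\dots,R_k'$; and, since here $D_{\mathcal R_k}$ is the complete digraph on $R_1',\dots,R_k'$, a $(k-1,1)$-system of dipaths (take the dipaths $R_1'R_i'R_2'$ for $i=3,\dots,k$ together with the two arcs between $R_1'$ and $R_2'$). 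Letting $k\to\infty$ produces in each case the required infinite sequence of auxiliary $\omega$-ray digraphs.

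The ``if'' direction is the substantial part, which I would approach in two steps. First, from the given sequence of auxiliary $\omega$-ray digraphs carrying structures of unbounded size I would extract, by a compactness argument (a tree of mutually compatible finite substructures together with K\"onig's lemma, after thinning the sequence so that these substructures overlap coherently), a single infinite structure of the corresponding kind inside $D$ made up of pairwise disjoint rays of $\omega$: an infinite semi-chain of dicycles of rays for \ref{itm:planarGrid1}; an infinite one-way dipath of rays together with the back-connections to its initial ray needed for the spiral for \ref{itm:planarGrid2}; and one ray robustly joined in both directions to infinitely many others for \ref{itm:planarGrid3}. Second, following the recursive weaving of \cites{Heuer_full-grid, ubiquity-2}, I would turn this skeleton into an honest subdivision of the target digraph, processing its rays one at a time and at each step attaching the next ray together with finitely many new connecting paths taken from the infinite supplies along the skeleton's arcs and dicycles, keeping everything pairwise disjoint by a greedy choice --- possible because each new path need only avoid the finite part fixed so far while each supply is infinite. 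All rays of the result lie in $\omega$, being tails of rays of $\omega$ with bounded detours. The main obstacle is the first step: the members of the given sequence sit on uncontrolled, non-nested sets of rays, so assembling arbitrarily long finite structures into one infinite structure whose shape matches the skeleton of the corresponding grid --- consecutive rays joined both ways; a one-way chain with spiral returns; one ray joined both ways to arbitrarily many others --- is delicate, as is the bookkeeping ensuring the woven union is a faithful subdivision with all its rays in $\omega$.

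For the concluding assertion, let $(D_{\mathcal R_t})_{t\in\mathbb N}$ be an infinite sequence of finite auxiliary $\omega$-ray digraphs with $|\mathcal R_t|\to\infty$. Since the rays in $\mathcal R_t\subseteq\omega$ are pairwise equivalent and finite in number, each $D_{\mathcal R_t}$ is a finite strong digraph with $|V(D_{\mathcal R_t})|=|\mathcal R_t|$. For $s\in\mathbb N$ let $N_s$ be a bound as in Theorem~\ref{thm:Kasper} for the parameters $(n,k)=(s,s)$, which we may take non-decreasing in $s$, and put $s(t):=\max\{\,s:N_s\le|\mathcal R_t|\,\}$; this is defined for all sufficiently large $t$ and tends to infinity. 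By Theorem~\ref{thm:Kasper}, for each such $t$ the digraph $D_{\mathcal R_t}$ contains a dicycle on at least $s(t)$ vertices, or a semi-chain of $s(t)$ dicycles, or an $(m,1)$-system of dipaths with $m\ge(s(t)-1)\,s(t)+3\ge s(t)$; colour $t$ by whichever of these three alternatives is chosen. Some colour class is infinite, and restricting the sequence to it and then passing to a subsequence along which the relevant parameter strictly increases produces an infinite sequence of auxiliary $\omega$-ray digraphs that contains semi-chains of increasing size, cycles of increasing length, or $(m,1)$-systems of dipaths for increasing $m$ --- that is, it meets the conditions of \ref{itm:planarGrid1}, \ref{itm:planarGrid2}, or \ref{itm:planarGrid3} respectively.
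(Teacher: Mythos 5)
Your treatment of the ``only if'' directions and of the concluding assertion is correct and essentially matches the paper: the structures you exhibit in $D_{\mathcal R_k}$ (the chain of $2$\nbd dicycles, the long dicycle --- including the needed observation that the spiral back-connections run through rays outside $\mathcal R_k$ --- and the $(k-1,1)$\nbd system at the hub pair) are exactly what is required, and the concluding assertion is, as you say, Theorem~\ref{thm:Kasper} applied to the finite strong digraphs $D_{\mathcal R_t}$ followed by a pigeonhole argument; this is also how the paper argues.

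The gap is in the ``if'' directions, which carry essentially all the weight of the theorem and which you only outline, flagging the main obstacle yourself. The first step of your plan --- extracting by compactness a \emph{single infinite} structure (an infinite semi-chain of dicycles of rays, an infinite spiral, or one ray joined in both directions to infinitely many others) supported on pairwise disjoint rays of~$\omega$ --- does not work and is not what the paper does. The finite structures guaranteed by the hypothesis live in auxiliary digraphs over unrelated, non-nested ray sets; moreover the auxiliary digraph is not monotone under enlarging $\mathcal R$, since an edge $R\to R'$ requires its witnessing dipaths to avoid \emph{all} other rays of $\mathcal R$, so edges can disappear when rays are added and there is no tree of compatible partial structures for K\"onig's lemma to act on. Indeed, the vertical rays of the final subdivision typically cannot be chosen among the original rays at all: the paper keeps the whole sequence of finite structures of growing size, links consecutive ones by systems of dipaths $P^j_i$ satisfying a non-crossing condition, stabilises the induced cyclic resp.\ linear orders by a pigeonhole over permutations, and then builds each ray of the subdivision as an \emph{infinite concatenation} of segments of infinitely many different original rays (the ``rerouting'' steps). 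Two consequences that your sketch gets wrong: first, the constructed rays are not ``tails of rays of $\omega$ with bounded detours'', so their membership in $\omega$ is not automatic --- the paper must interleave, at every stage, a step joining the partial rays to a fixed $R^*\in\omega$ by fresh disjoint dipaths in both directions, and then invoke Lemma~\ref{lem:one-ended}; second, in case \ref{itm:planarGrid2} one does not obtain a spiral skeleton directly but only a stabilised cyclic order, which must be cut open into a linear order from which an infinite monotone subsequence yields either the ascending or the descending cyclically directed quarter-grid (and similarly an ascending or descending subsequence must be extracted in case \ref{itm:planarGrid1}). Without these ingredients the ``if'' directions remain unproved.
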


\begin{proof}
Clearly, the definitions of the bidirected quarter-grid, cyclically directed quarter-grid and the complete ray digraph imply the existence of suitable sequences of auxiliary digraphs.

\medskip

Let $R_1,R_2,\ldots$ be infinitely many pairwise disjoint rays in~$\omega$.
We define a sequence $(D_i)_{i\in\mathbb N}$ of auxiliary $\omega$-ray digraphs where $D_i$ has vertex set $\{ R_j\mid 1\leq j\leq i\}$.
Let $(i_j)_{j\in\mathbb N}$, $(k_j)_{j\in\mathbb N}$ and $(n_j)_{j\in\mathbb N}$ be increasing sequences going to~$\infty$ such that $N(k_p,n_p)\leq i_p$ for the constant $N(k_p,n_p)$ from Theorem~\ref{thm:Kasper}.
Thus, we can apply Theorem~\ref{thm:Kasper} to each~$D_{i_p}$ for $k_p$ and $n_p$ and obtain that $D_{i_p}$ either contains a dicycle of length~$n_p$, an $n_p$-narrow semi-chain on~$k_p$ dicycles or an $n_p$-short $((k_p-1)n_p+3,1)$-system of dipaths.
This implies that we can find a subfamily $(D_i)_{i\in I}$ for a strictly increasing sequence $I$ in~$\mathbb N$ such that either all $D_i$ contain dicycles whose lengths are strictly increasing or all $D_i$ contain semi-chains whose numbers of dicycles are strictly increasing or all $D_i$ contain $(\ell,1)$-systems of dipaths for strictly increasing~$\ell$.
This completes the proof of the additional statement of the theorem.

\medskip

Let us now prove the remaining implication of~\ref{itm:planarGrid2}.
For this, we assume that all elements of $(D_i)_{i\in I}$ contain dicycles and that the lengths of those are strictly increasing.
We may assume that $I=(n_i)_{i\in\mathbb N}$ is such that $n_{i+1}\geq n_i^2$ and such that $D_{n_{i+1}}$ contains a dicycle of length at least~$n_i$.
Recall that $D_{n_{i+1}}$ contains precisely $n_{i+1}$ many vertices.
We will define, for every element $n_j$ of~$I$, a dicycle $C_{n_j}$ in $D_{n_{j+1}}$ of length at least $n_j$, and a sequence $(P_i^j)_{1\leq i\leq n_{j-1}}$ of dipaths in $D_{n_{j+1}}$ that has the following properties.
\begin{enumerate}[label = (\arabic*)]
\item\label{itm:planarGridCycle1} Each $P_i^j$ starts at a vertex of $C_{n_{j-1}}$, seen as vertices of $D_{n_{j+1}}$, and ends at a vertex of~$C_{n_j}$.
\item\label{itm:planarGridCycle2} $P_i^j$ does not contain any end vertex of $P_k^j$ for $k<i$.
\item\label{itm:planarGridCycle3} $P_i^j$ does not contain any starting vertex of $P_k^j$ for $k>i$.
\end{enumerate}
Let us assume that we have constructed for a finite sequence $(n_0,\ldots, n_j)$ dicycles $C_{n_k}$, for $k\leq j$, and sequences of dipaths $P_i^k$, for $k\leq j$ and $1\leq i\leq n_{j-1}$.
Let $C_{n_{j+1}}$ be a dicycle of length at least $n_{j+1}$ in $D_{n_{j+2}}$, which exists by our assumption.
By the lengths of $C_{n_j}$ and $C_{n_{j+1}}$ and as $D_{n_{j+2}}$ is strongly connected, there exists a sequence $(P_1^{j+1},\ldots, P_{n_j}^{j+1})$ of dipaths starting in a vertex set in $V(C_{n_j})$ that contains the end vertices of the dipaths $P_i^{j}$ and that end on $C_{n_{j+1}}$ such that \ref{itm:planarGridCycle1}--\ref{itm:planarGridCycle3} hold for this sequence.

Let us now construct an infinite sequence $(I_j)_{j\in\mathbb N}$ of subsequences of~$I$ such that $I_j$ and $I_{j+1}$ have their first $j$ elements in common.
For this, we start with $I_1:=I$ and assume that we have already constructed the sequences $I_1,\ldots,I_{j-1}$.
Let $v_1,\ldots, v_{n_{j-1}}$ be the starting vertices of the dipaths $P_i^j$ on $C_{n_{j-1}}$ in the cyclic order given by the cycle.
For every $k\geq j$, let $x_1^{j,k},\ldots, x_{n_{j-1}}^{j,k}$ be the vertices on $C_{n_k}$ that are end vertices of dipaths $P_i^{k}$ such that $x_i^{j,k}$ is obtained by starting at~$v_i$ and following the $P_{i'}^j$ starting at~$v_i$ in $D_{n_{j+1}}$ and at its end vertex on $C_{n_{j}}$ follow $P_{i''}^{j+1}$ in $D_{n_{j+2}}$ to its end vertex on $C_{n_{j+1}}$ and so on until we reach $C_{n_k}$ in $D_{n_{k+1}}$ via the dipath $P_{i^\circ}^{k}$ at its end vertex $x_i^{j,k}$.
This is possible by~\ref{itm:planarGridCycle1}.
Let $\sigma_{j,k}$ be a permutation of $\{1,\ldots,n_{j-1}\}$ such that $x_{\sigma_{j,k}(1)}^{j,k},\ldots, x_{\sigma_{j,k}(n_{j-1})}^{j,k}$ is the cyclic order of these vertices on $C_{n_k}$.
Then there exists a permutation $\sigma_j$ of $\{1,\ldots,n_{j-1}\}$ such that infinitely many elements $k$ of $I_{j-1}$ satisfy $\sigma_j=\sigma_{j,k}$.
We modify $I_{j-1}$ to obtain $I_{j}$ such that after the $(j-1)$-st entry we take a subsequence consisting only of such elements~$k$.

Let $I_\infty=(m_i)_{i\in\mathbb N}$ be the subsequence of~$I$ whose first $j$ entries are the first $j$ entries of~$I_j$ for every $j\in\mathbb N$.
Let us now construct a subdigraph of~$D$ based on the sequences that we have constructed.
While this will, generally, not be a subdivision of one of the cyclically directed quarter-grids, the constructed subgraph will contain a subdivision of one of those two digraphs.

Let us fix some $R^* \in \omega$.
We start by taking large enough finite initial segments of the rays that are the vertices $x^{1,m_1}_1,\ldots, x^{1,m_1}_{n_0}$ in~$D_{m_2}$ such that there exists a set $\mathcal{P}^*$ of dipaths, one from each of those initial segments to~$R^*$ and vice versa.
We shall later during our construction find such dipaths again, so we denote this step of our construction briefly by \emph{connecting to}~$R^*$.
Note that we do not include the dipaths from $\mathcal{P}^*$ in our construction of the desired subdivisions.
We only need to ensure their existence in~$D$.
Let $X\subseteq V(D)$ be a finite vertex set.
Now we continue by prolonging our already chosen initial segments of the rays $x^{1,m_1}_1,\ldots, x^{1,m_1}_{n_0}$ such that we can find a set of disjoint dipaths all avoiding~$X$, one from each initial segment to the initial segment of its successor in the cyclic order induced by $C_{m_1}$ with the property that on each ray, except for $x^{1,m_1}_1$, we first have the end vertex of the dipath from the cyclic predecessor before we have the starting vertex of the dipath to the cyclic successor.
We may assume that for $x^{1,m_1}_1$ it is the other way, that is, this ray first contains the starting vertex of the dipath to its cyclic successor before it contains the end vertex of the other dipath coming from its cyclic predecessor.
We will use such a construction later once more, so will simply say that we have \emph{cyclically connected} the segments \emph{avoiding} $X$, when referring to this particular construction.

Let $X \subseteq V(D)$ be a finite vertex set.
Then we say that we \emph{follow} a dipath $P_i^j$ in~$D$ and \textit{avoid} $X$ if we run on the ray $S_1$ corresponding to the first vertex of $P_i^j$ until we can follow a dipath $P_{1,2}$ to a tail $T_2$ of the ray corresponding to the second vertex of $P_i^j$ such that $P_{1,2}$ and $T_2$ are disjoint from $X$, and $P_{1,2}$ intersects only $S_1$ and $T_2$, but no other ray from $D_{n_{j+1}}$; furthermore, we continue this along $P_i^j$ while guaranteeing that all dipaths $P_{\ell, \ell+1}$ are chosen to be pairwise disjoint, until we reach the ray corresponding to the last vertex of~$P_i^j$.
Let $k,\ell\in\mathbb N$ such that $m_1=n_k$ and $m_2=n_\ell$.
We say that we \emph{reroute} from $C_{m_1}$ to $C_{m_2}$ if we follow each dipath $P_i^{n_{k+1}}$ to $C_{n_{k+1}}$ and avoid the finite construction we made so far, i.e.\ segments of rays together with dipaths used for cyclically connecting them, then follow the dipaths $P_i^{n_{k+2}}$ avoiding the vertices of our finite construction and so on until we follow the dipaths $P_i^{n_{\ell}}$ avoiding the vertices of our finite construction.

We continue our construction by taking the finite digraph obtained from connecting segments of the rays $x^{1,m_1}_1,\ldots, x^{1,m_1}_{n_0}$ to~$R^*$, cyclically connecting the obtained segments twice each time avoiding everything built so far and rerouting to~$C_{m_2}$.
For every ray $Q$ on~$C_{m_2}$ that does not contain an end vertex of some rerouted dipath, we add a new vertex $q \in V(Q)$ to our construction such that $qQ$ avoids everything that we constructed so far.

We recursively continue this construction by repeating the following small steps: first, we connect the dipaths to~$R^*$ via dipaths that are disjoint from all dipaths of previous steps of connecting to~$R^*$, then we cyclically connect the dipaths twice with respect to the cycle $C_{m_j}$ each time avoiding everything built so far, then we reroute from $C_{m_j}$ to $C_{m_{j+1}}$ and, lastly, we add dipaths of the new rays from $C_{m_{j+1}}$.
We call this combination of these steps a \emph{big} step.
After having performed big steps along the whole sequence $I_\infty$, we denote the resulting subdigraph of~$D$ by~$D'$.

If we remove from $D'$ the dipaths added in the step of cyclically connecting segments of the rays, then we are left with infinitely many pairwise disjoint rays.
All these rays are contained in~$\omega$ since we always connected disjoint segments of those rays to~$R^*$ resulting in systems of infinitely many disjoint dipaths to and from~$R^*$, which forces them to be equivalent to~$R^*$.
By the choice of~$I_\infty$, in each big step of our construction, these rays keep their cyclic order.
We split up our cyclic order of the rays by saying that the ray that was started in the very first step with a subdipath of $x^{1,m_1}_1$ is our smallest ray and all other rays are ordered above it corresponding to the cyclic order.
That way, we obtain a linear order.
Since there are infinitely many pairwise disjoint rays, we either contain an infinite strictly increasing set of rays or an infinite strictly decreasing set of rays.
In the first case, since we cyclically connected the segments of the resulting rays in each big step twice, we can ensure that every increasing subsequence is cyclically connected once in that big step.
Hence, we immediately obtain a subdivision of the ascending cyclically directed quarter-grid by restricting to that sequence and keeping the cyclically connecting dipaths across potentially skipped rays.
In the second case, we similarly obtain a subdivision of the descending cyclically directed quarter grid. 
Finally, it follows from Lemma~\ref{lem:one-ended} that the constructed subdivisions of the cyclically directed quarter-grids have all rays in~$\omega$.

\medskip

Let us now prove the missing implication of~\ref{itm:planarGrid1}.
We assume that all elements of $(D_i)_{i\in I}$ contain semi-chains of increasing numbers of dicycles, that is, there exist strictly increasing sequences $I = (n_j)_{j\in\mathbb N}$ and $(k_j)_{j\in\mathbb N}$ such that $D_{n_{j}}$ contains a semi-chain of~$k_j$ dicycles.
Each of these semi-chains is $n$-narrow for a certain $n\in \mathbb N$, but we shall not make use of this, so we can drop that information.
We may assume that $k_{j+1}\geq k_j^2$.
Similarly to the previous case, we will define for every $k_j$ a semi-chain $\mathcal S_C^j(C_1^j,\ldots, C_{k_j}^j)$ of $k_j$ dicycles $C_1^j,\ldots, C_{k_j}^j$ and a sequence $(P_i^j)_{1\leq i\leq k_{j-1}}$ of dipaths in $D_{n_{j}}$ that has the following properties.
\begin{enumerate}[label = (\arabic*)]
\setcounter{enumi}{3}
\item\label{itm:planarGridSC1} Each $P_i^j$ starts at a vertex of a different $C^{j-1}_i$, seen as vertices of $D_{n_{j}}$, and ends at a vertex of a different~$C^j_\ell$ for odd~$\ell$.
\item\label{itm:planarGridSC2} $P_i^j$ does not contain any end vertex of $P_k^j$ for $k<i$.
\item\label{itm:planarGridSC3} $P_i^j$ does not contain any starting vertex of $P_k^j$ for $k>i$.
\end{enumerate}
Let us assume that, for a finite sequence $(k_0,\ldots,k_j)$, we have constructed the semi-chains $\mathcal S_C^\ell(C_1^\ell,\ldots, C_{k_\ell}^\ell)$ of dicycles and dipaths $P_i^\ell$ for $1\leq\ell\leq j$ and  $1\leq i\leq k_{\ell - 1}$ as claimed above.
Let $\mathcal S_C^{j+1}(C_1^{j+1},\ldots, C_{k_{j+1}}^{j+1})$ be a semi-chain of $k_{j+1}$ dicycles in~$D_{n_{j+1}}$.
By the choices of $k_j$ and $k_{j+1}$ and as $D_{n_{j+1}}$ is strongly connected, there exists a sequence $(P_1^{j+1},\ldots, P_{k_j}^{j+1})$ of dipaths starting at vertices of different dicycles $C_i^j$ that contain the end vertices of the dipaths $P_\ell^{j-1}$ and that end on different dicycles $C_i^{j+1}$ for odd $i$ such that \ref{itm:planarGridSC1}--\ref{itm:planarGridSC3} hold for this sequence.
We may assume that the starting vertices always avoid the dicycle $C_{i-1}^j$ and the analogue is true for the end vertices.

Let us now define a sequence $(I_j)_{j\in\mathbb N}$ of subsequences of~$I$ such that $I_j$ and $I_{j+1}$ have their first $j$ elements in common.
We follow the definition of the sequence as in the previous case except that we consider a total order induced by the semi-chains of dicycles instead of a cyclic order.
Again, we let $I_\infty$ be the subsequence of~$I$ whose first $j$ elements coincide with the first $j$ elements of~$I_j$.
We will now construct a subdigraph of~$D$ of which we will show later that it contains a subdivision of the bidirected quarter-grid.

As in the previous case, we let $x_i^{j,m_j}$ denote the end vertices after starting at the first vertex of $P_i^j$ and following at its end vertex the dipath $P_{i'}^{j+1}$ in $D_{j+1}$ and so on until we reach the end vertex $x_i^{j,m_j}$ of a dipath $P_{i''}^{m_j}$.
Now, precisely as in the previous case, we take suitable starting dipaths of the rays corresponding to the $x_i^{1,m_1}$ and \emph{connect} them to some fixed $R^* \in \omega$.
We continue by extending those starting dipaths of the rays $Q_i$ corresponding to the $x_i^{1,m_1}$ and join every two consecutive ones in the order that we took in the step of defining $I_1$ by disjoint dipaths in both directions that avoid all other rays belonging to some $x_\ell^{1,m_1}$.
We do this by starting at~$Q_1$ and first joining it via a dipath to~$Q_2$ and then finding a disjoint dipath from~$Q_2$ to~$Q_1$ such that the end vertex of the first dipath lies before the starting vertex of the second dipath on~$Q_2$ and the first vertex of the first dipaths lies before the end vertex of the second dipath on~$Q_1$.
We continue this until we reach the maximal element such that for each $i$ the vertices on~$Q_i$ that lie on dipaths between $Q_{i-1}$ and $Q_i$ lie before the vertices on dipaths between $Q_i$ and~$Q_{i+1}$.
We call this \emph{linearly connecting} segments of the rays and say that it \emph{avoids} a finite set $X\subseteq V(D)$ if none of the dipaths intersects~$X$.

Exactly the same way as in the previous case, we are \emph{rerouting} from semi-chains of dicycles to larger ones.
After that, for every odd $i$ such that $C_{i}^{m_j}$ does not contain a ray that contains an end vertex of some rerouted dipath, we choose a ray $Q$ on~$C_{i}^{m_j}$ and add a new vertex $q \in V(Q)$ to our construction such that $qQ$ avoids everything that we constructed so far.

We recursively repeat these four steps of connecting to~$R^*$ via dipaths that are disjoint from all dipaths of previous steps of connecting to~$R^*$, linearly connecting segments of the rays avoiding everything built so far, rerouting them and then adding remaining ones.
Iterating this along all of $I_{\infty}$, we obtain the subdigraph $D'$ of~$D$.

Removing the dipaths added in the step of linearly connecting segments of rays leads to infinitely many pairwise disjoint rays in~$\omega$ that are arranged in a linear order.
Thus, there exists an infinite strictly ascending or strictly descending sequence.
While the first case directly leads to a subdivision of the bidirected quarter-grid by restricting to that sequence and keeping the linearly connecting dipaths across potentially skipped rays, we have to remove some of the linearly connecting dipaths in the second case.
Lemma~\ref{lem:one-ended} ensures again that the constructed subdivision of the bidirected quarter-grid has all rays in~$\omega$.

\medskip

Let us now finish the proof of~\ref{itm:planarGrid3}.
We assume that $(D_i)_{i\in \mathbb N}$ contains $(\ell,1)$-systems of $x_i$--$y_i$ dipaths for vertices $x_i,y_i\in V(D_i)$ for increasing~$\ell$.
Similarly as in the case before, all these $(\ell,1)$-systems of $x_i$--$y_i$ dipaths are $n$-short for certain $n \in \mathbb N$, but we shall not need that additional information.
Let $I = (n_i)_{i \in \mathbb N}$ and $(\ell_i)_{i \in \mathbb N}$ be strictly increasing sequences of natural numbers where we may assume that the digraph $D_{n_i}$ contains an $(\ell_i,1)$-system of $x_i$--$y_i$ dipaths and that $\ell_{i+1}\geq 2\ell_i$.
Let $H_{n_i}$ be the subdigraph of~$D_{n_i}$ formed by the $x_{n_i}$--$y_{n_i}$ dipaths and the $y_{n_i}$--$x_{n_i}$ dipath of the $(\ell_i,1)$-system of $x_{n_i}$--$y_{n_i}$ dipaths.
For every $j\in\mathbb N$, we define a sequence $(P_1^j,\ldots, P_{\ell_j}^j)$ of dipaths in~$D_{n_j}$ with the following properties.
\begin{enumerate}[label = (\arabic*)]
\setcounter{enumi}{6}
\item\label{itm:planarGridSys1} Each $P_i^j$ starts at an inner vertex of a different $x_{j-1}$--$y_{j-1}$ dipath in $H_{n_{j-1}}$, seen as vertices of $D_{n_j}$, and ends at an inner vertex of a different $x_{j}$--$y_{j}$ dipath.
\item\label{itm:planarGridSys2} $P_i^j$ does not contain any end vertex of $P_k^j$ for $k<i$.
\item\label{itm:planarGridSys3} $P_i^j$ does not contain any starting vertex of $P_k^j$ for $k>i$.
\end{enumerate}
These dipaths exist by the choices of~$\ell_j$ and as $D_{n_{j}}$ is strongly connected.

Contrary to the previous two cases, we do not need to refine our sequence in this case.
Instead, we can directly start with the construction of a complete ray digraph.
For that, we first connect starting dipaths of the rays belonging to the starting vertices of the dipaths $P_i^1$ to~$R^*$ as in the previous cases.
Then we \emph{completely connect} those segments of rays: between every two we add a dipath in each direction such that these dipaths do not intersect and such that each of these dipaths does not intersect with the rays from the starting vertices of the other dipaths.
We obtain these desired dipaths similarly as in the previous cases by first finding suitable dipaths in $H_{n_1}$, and then translating them to dipaths in~$D$.

Then we can \emph{reroute} the starting dipaths along the dipaths $P_i^1$ in $D_{n_2}$ to the rays belonging to the end vertices of those dipaths.
As the fourth step, let $T$ be a $x_1$--$y_1$ dipath in $H_{n_1}$ with inner vertices that does not contain a ray which contains an end vertex of some rerouted dipath.
For every such $T$ we choose a ray $Q$ on~$T$ that corresponds to an inner vertex and add a new vertex $q \in V(Q)$ to our construction such that $qQ$ avoids everything that we constructed so far.
Again, we recursively repeat these four steps for every $j\in\mathbb N$, where the connecting to~$R^*$ happens via dipaths that are disjoint from all dipaths of previous steps of connecting to~$R^*$.
The resulting digraph contains a complete ray digraph which has all rays in~$\omega$ by construction and Lemma~\ref{lem:one-ended}.
\end{proof}

Now we deduce Theorem~\ref{thm:planarGridIntro} from the previous theorem.

\begin{proof}[Proof of Theorem~\ref{thm:planarGridIntro}]
We shall prove the theorem only for the case that there exist infinitely many pairwise disjoint rays in~$\omega$.
The other case concerning anti-rays follows from that proof applied to the digraph obtained from $D$ where all edges are reversed.

Since $\omega$ contains infinitely many pairwise disjoint rays, there exists an infinite sequence of finite auxiliary $\omega$-ray digraphs of strictly increasing size.
Therefore, Theorem~\ref{thm:planarGrid} implies that $D$ contains one of the following digraphs as subdivision all of whose rays are in~$\omega$: either a bidirected quarter-grid, or one of the cyclically directed quarter grids or a complete ray digraph.

\begin{figure}[ht]
    \centering
       \begin{subfigure}[b]{0.38\linewidth}
    \centering
\begin{tikzpicture}

    \draw[-{Latex[length=2.75mm]}] (0,0) -- (0,6.75);
    \draw[-{Latex[length=2.75mm]}] (1,0) -- (1,6.75);
    \draw[-] (2,1.5) -- (2,6.5);
    \draw[-] (3,3) -- (3,6.5);
    \draw[-{Latex[length=2.75mm]}] (4,4.5) -- (4,6.75);

    \foreach \y in {0,3,4,5,6}
    \draw[-{Latex[length=2.75mm]}] (0,\y) -- (1,\y);

    \foreach \y in {4,6}
    \draw[-{Latex[length=2.75mm]}] (2,\y) -- (3,{\y});

    \draw[-{Latex[length=2.75mm]}] (2,2) -- (0,1.5);
    \draw[-{Latex[length=2.75mm]}] (3,3.5) -- (0,2.5);
    \draw[-{Latex[length=2.75mm]}] (4,5) -- (0,3.5);
    \draw[-{Latex[length=2.75mm]}] (4.25,6.25) -- (0,4.5);
    \draw[-{Latex[length=2.75mm]}] (2.45,6.5) -- (0,5.5);

    \draw[blue, very thick, -{Latex[length=2.75mm]}] (1,0) -- (1,1.5);
    \draw[blue, very thick, -{Latex[length=2.75mm]}] (1,1.5) -- (2,1.5);
    \draw[blue, very thick, -{Latex[length=2.75mm]}] (2,1.5) -- (2,3);
    \draw[blue, very thick, -{Latex[length=2.75mm]}] (2,3) -- (3,3);
    \draw[blue, very thick, -{Latex[length=2.75mm]}] (3,3) -- (3,4.5);
    \draw[blue, very thick, -{Latex[length=2.75mm]}] (3,4.5) -- (4,4.5);
    \draw[blue, very thick, -{Latex[length=2.75mm]}] (4,4.5) -- (4,6);
    \draw[blue, very thick, -{Latex[length=2.75mm]}] (4,6) -- (4.75,6);
    
    \draw[red, very thick, -{Latex[length=2.75mm]}] (0,0.5) -- (0,2);
    \draw[red, very thick, -{Latex[length=2.75mm]}] (0,2) -- (1,2);
    \draw[red, very thick, -{Latex[length=2.75mm]}] (1,2) -- (1,3.5);
    \draw[red, very thick, -{Latex[length=2.75mm]}] (1,3.5) -- (2,3.5);
    \draw[red, very thick, -{Latex[length=2.75mm]}] (2,3.5) -- (2,5);
    \draw[red, very thick, -{Latex[length=2.75mm]}] (2,5) -- (3,5);
    \draw[red, very thick, -{Latex[length=2.75mm]}] (3,5) -- (3,6.75);
    
    \draw[orange, very thick, -{Latex[length=2.75mm]}] (1,4) -- (1,5.5);
    \draw[orange, very thick, -{Latex[length=2.75mm]}] (1,5.5) -- (2,5.5);
    \draw[orange, very thick, -{Latex[length=2.75mm]}] (2,5.5) -- (2,6.75);
    
    \draw[cyan, very thick, -{Latex[length=2.75mm]}] (1,0.5) -- (0,0.5);
    \draw[cyan, very thick, -{Latex[length=2.75mm]}] (0,1) -- (1,1);

    \draw[cyan, very thick, -{Latex[length=2.75mm]}] (1,1.5) -- (1,2);
    \draw[cyan, very thick, -{Latex[length=2.75mm]}] (1,2.5) -- (2,2.5);
    
    \draw[cyan, very thick, -{Latex[length=2.75mm]}] (3,4.5) -- (3,5);
    \draw[cyan, very thick, -{Latex[length=2.75mm]}] (3,5.5) -- (4,5.5);

    \draw[cyan, very thick, -{Latex[length=2.75mm]}] (1,3.5) -- (1,4);
    \draw[cyan, very thick, -{Latex[length=2.75mm]}] (1,4.5) -- (2,4.5);

    \foreach \y in {0,0.5,1,1.5,2,2.5,3,3.5,4,4.5,5,5.5,6}
        \fill (0,\y) circle (2pt);

    \foreach \y in {0,0.5,1,1.5,2,2.5,3,3.5,4,4.5,5,5.5,6}
        \fill (1,\y) circle (2pt);

    \foreach \y in {1.5,2,2.5,3,3.5,4,4.5,5,5.5,6}
        \fill (2,\y) circle (2pt);

    \foreach \y in {3,3.5,4,4.5,5,5.5,6}
        \fill (3,\y) circle (2pt);

    \foreach \y in {4.5,5,5.5,6}
        \fill (4,\y) circle (2pt);

    \draw (0.5,6.75);
    \fill (0.5,6.75) circle (1pt);
    \draw (0.5,7);
    \fill (0.5,7) circle (1pt);
    \draw (0.5,7.25);
    \fill (0.5,7.25) circle (1pt);    

    \draw (2.5,6.75);
    \fill (2.5,6.75) circle (1pt);
    \draw (2.5,7);
    \fill (2.5,7) circle (1pt);
    \draw (2.5,7.25);
    \fill (2.5,7.25) circle (1pt);    

    \draw (4.5,6.5);
    \fill (4.5,6.5) circle (1pt);
    \draw (4.75,6.5);
    \fill (4.75,6.5) circle (1pt);
    \draw (5,6.5);
    \fill (5,6.5) circle (1pt);

\end{tikzpicture}
        \end{subfigure}
       \hspace{3em}
        \begin{subfigure}[b]{0.38\linewidth}
    \centering

\begin{tikzpicture}
    \draw[-] (0,-0.5) -- (0,4);

    \draw[-] (-1,0) -- (-1,4);

    \draw[-] (-3,1) -- (-3,4);

    \draw[-{Latex[length=2.75mm]}] (-4,1.5) -- (-4,4.25);

    \foreach \y in {2.5}
    \draw[-{Latex[length=2.75mm]}] (-1,\y) -- (0,\y);

    \foreach \y in {3}
    \draw[-{Latex[length=2.75mm]}] (-2,\y) -- (-1,{\y});

    \foreach \y in {1.5,3.5}
    \draw[-{Latex[length=2.75mm]}] (-3,\y) -- (-2,{\y});

    \draw[blue, very thick, -{Latex[length=2.75mm]}] (0,1) -- (-2,0.5);
    \draw[red, very thick, -{Latex[length=2.75mm]}] (0,2) -- (-3,1);
    \draw[orange, very thick, -{Latex[length=2.75mm]}] (0,3) -- (-4,1.5);
    \draw[-{Latex[length=2.75mm]}] (0,4) -- (-4.75,2.1);

    \draw[-] (-2,2) -- (-2,2.5);

    \foreach \y in {2.5,3.5}
    \draw[-{Latex[length=2.75mm]}] (-4.75,\y) -- (-4,\y);

    \draw[blue, very thick, -{Latex[length=2.75mm]}] (0,-0.5) -- (0,1);
    \draw[blue, very thick, -{Latex[length=2.75mm]}] (-2,0.5) -- (-2,2);
    \draw[blue, very thick, -{Latex[length=2.75mm]}] (-2,2) -- (-1,2);
    \draw[blue, very thick, -{Latex[length=2.75mm]}] (-1,2) -- (-1,3.5);
    \draw[blue, very thick, -{Latex[length=2.75mm]}] (-1,3.5) -- (0,3.5);
    \draw[blue, very thick, -{Latex[length=2.75mm]}] (0,3.5) -- (0,4.25);

    \draw[red, very thick, -{Latex[length=2.75mm]}] (-1,0) -- (-1,1.5);
    \draw[red, very thick, -{Latex[length=2.75mm]}] (-1,1.5) -- (0,1.5);
    \draw[red, very thick, -{Latex[length=2.75mm]}] (0,1.5) -- (0,2);
    \draw[red, very thick, -{Latex[length=2.75mm]}] (-3,1) -- (-3,2.5);
    \draw[red, very thick, -{Latex[length=2.75mm]}] (-3,2.5) -- (-2,2.5);
    \draw[red, very thick, -{Latex[length=2.75mm]}] (-2,2.5) -- (-2,4.25);

    \draw[orange, very thick, -{Latex[length=2.75mm]}] (0,2.5) -- (0,3);
    \draw[orange, very thick, -{Latex[length=2.75mm]}] (-4,1.5) -- (-4,3);
    \draw[orange, very thick, -{Latex[length=2.75mm]}] (-4,3) -- (-3,3);
    \draw[orange, very thick, -{Latex[length=2.75mm]}] (-3,3) -- (-3,4.25);
    
    \draw[cyan, very thick, -{Latex[length=2.75mm]}] (0,0) -- (-1,0);
    \draw[cyan, very thick, -{Latex[length=2.75mm]}] (-1,0.5) -- (0,0.5);
    \draw[cyan, very thick, -{Latex[length=2.75mm]}] (-2,1) -- (-1,1);
    \draw[cyan, very thick, -{Latex[length=2.75mm]}] (-1,1.5) -- (-1,2);
    \draw[cyan, very thick, -{Latex[length=2.75mm]}] (0,2) -- (0,2.5);
    \draw[cyan, very thick, -{Latex[length=2.75mm]}] (-4,2) -- (-3,2);
    \draw[cyan, very thick, -{Latex[length=2.75mm]}] (-1,3.5) -- (-1,4.25);

    \foreach \y in {-0.5,0,0.5,1,1.5,2,2.5,3,3.5}
        \fill (0,\y) circle (2pt);

    \foreach \y in {0,0.5,1,1.5,2,2.5,3,3.5}
        \fill (-1,\y) circle (2pt);

    \foreach \y in {0.5, 1, 1.5,2,2.5,3,3.5}
        \fill (-2,\y) circle (2pt);

    \foreach \y in {1, 1.5, 2, 2.5, 3,3.5}
        \fill (-3,\y) circle (2pt);

    \foreach \y in {1.5, 2, 2.5, 3,3.5}
        \fill (-4,\y) circle (2pt);

    \draw (-0.5,4.25);
    \fill (-0.5,4.25) circle (1pt);
    \draw (-0.5,4.5);
    \fill (-0.5,4.5) circle (1pt);
    \draw (-0.5,4.75);
    \fill (-0.5,4.75) circle (1pt);    

    \draw (-2.5,4.25);
    \fill (-2.5,4.25) circle (1pt);
    \draw (-2.5,4.5);
    \fill (-2.5,4.5) circle (1pt);
    \draw (-2.5,4.75);
    \fill (-2.5,4.75) circle (1pt);    

    \draw (-5,2.75);
    \fill (-5,2.75) circle (1pt);
    \draw (-5.25,2.75);
    \fill (-5.25,2.75) circle (1pt);
    \draw (-5.5,2.75);
    \fill (-5.5,2.75) circle (1pt);    

    \draw (-5,4);
    \fill (-5,4) circle (1pt);
    \draw (-5.25,4);
    \fill (-5.25,4) circle (1pt);
    \draw (-5.5,4);
    \fill (-5.5,4) circle (1pt);    

\end{tikzpicture}
\end{subfigure}
\caption{A subdivision of the bidirected quarter-grid in the ascending (on the left) and the descending (on the right) cyclically directed quarter-grid, where the edges coloured in blue, red and orange correspond to subdivisions of the rays $R_1$, $R_2$, and $R_3$ from the bidirected quarter-grid.
    The cyan edges highlight the edges of bidirected quarter-grid between the rays.}
    \label{fig:D_1_in_D_2}
\end{figure}
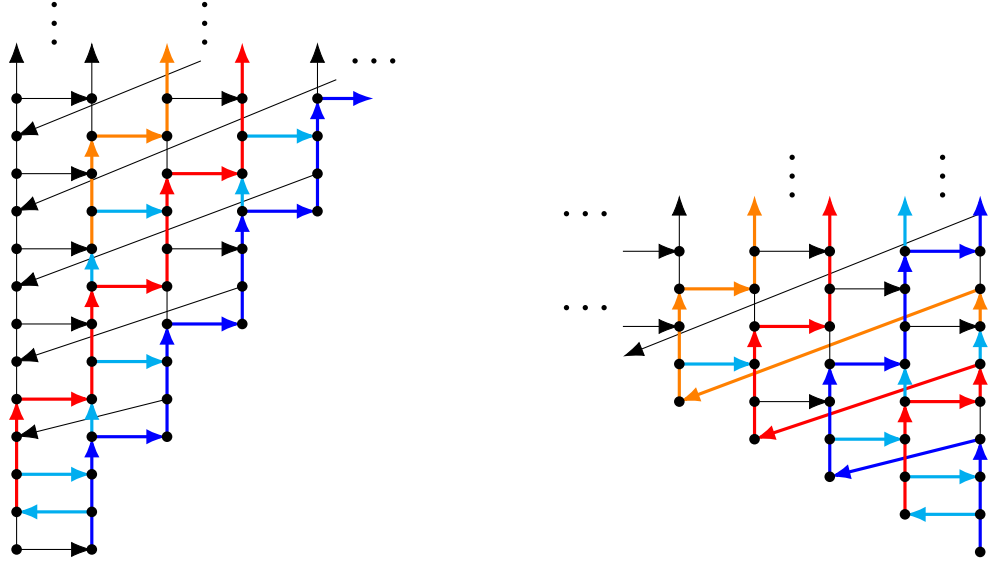

Obviously, any complete ray digraph contains a subdivision of the bidirected quarter-grid.
A way to find a subdivision of the bidirected quarter-grid in the cyclically directed quarter-grids, is indicated in Figure~\ref{fig:D_1_in_D_2}.
This finishes the proof of the theorem.
\end{proof}

Zuther \cite{Z1998}*{Theorem 3.1} proved that every digraph with an infinite increasing sequence of ends, each of which contains a ray, has a thick end as a supremum of this sequence and the digraph from Figure~\ref{fig:Zuther} as subdivision with the red ray lying in the thick end.
Using the same method how we found a bidirected quarter-grid in the cyclically directed quarter-grids, we can also find a bidirected quarter-grid in Zuther's digraph.

\begin{figure}[ht]
    \centering
\begin{tikzpicture}

    \draw[-{Latex[length=2.75mm]}] (0,0) -- (0,3.75);
    \draw[-{Latex[length=2.75mm]}] (1,0) -- (1,3.75);
    \draw[-{Latex[length=2.75mm]}] (2,0.5) -- (2,3.75);
    \draw[-] (3,1) -- (3,3.5);
    \draw[-{Latex[length=2.75mm]}] (4,1.5) -- (4,3.75);

    \foreach \y in {1,2,3}
    \draw[-{Latex[length=2.75mm]}] (0,\y) -- (1,\y);

    \foreach \y in {0.5,2.5}
    \draw[-{Latex[length=2.75mm]}] (1,\y) -- (2,{\y});

    \foreach \y in {1,2}
    \draw[-{Latex[length=2.75mm]}] (2,\y) -- (3,{\y});

    \foreach \y in {1.5,2.5}
    \draw[-{Latex[length=2.75mm]}] (3,\y) -- (4,{\y});

    \draw[-{Latex[length=2.75mm]}] (4,2) -- (4.75,2);
    \draw[-{Latex[length=2.75mm]}] (4,3) -- (4.75,3);

    \draw[red, very thick, -{Latex[length=2.75mm]}] (0,0) -- (1,0);
    \draw[red, very thick, -{Latex[length=2.75mm]}] (1,0) -- (1,1.5);
    
    \draw[red, very thick, -{Latex[length=2.75mm]}] (1,1.5) -- (2,1.5);
    \draw[red, very thick, -{Latex[length=2.75mm]}] (2,1.5) -- (2,3);

    \draw[red, very thick, -{Latex[length=2.75mm]}] (2,3) -- (3,3);
    \draw[red, very thick, -{Latex[length=2.75mm]}] (3,3) -- (3,3.75);

    \foreach \y in {0,0.5,1,1.5,2,2.5,3}
        \fill (0,\y) circle (2pt);

    \foreach \y in {0,0.5,1,1.5,2,,2.5,3}
        \fill (1,\y) circle (2pt);
    
    \foreach \y in {0.5,1,,1.5,2,2.5,3}
        \fill (2,\y) circle (2pt);

    \foreach \y in {1,1.5,2,2.5,3}
        \fill (3,\y) circle (2pt);

    \foreach \y in {1.5,2,2.5,3}
        \fill (4,\y) circle (2pt);

    \draw (0.5,3.75);
    \fill (0.5,3.75) circle (1pt);
    \draw (0.5,4);
    \fill (0.5,4) circle (1pt);
    \draw (0.5,4.25);
    \fill (0.5,4.25) circle (1pt);    

    \draw (2.5,3.75);
    \fill (2.5,3.75) circle (1pt);
    \draw (2.5,4);
    \fill (2.5,4) circle (1pt);
    \draw (2.5,4.25);
    \fill (2.5,4.25) circle (1pt);    

    \draw (4.75,3.75);
    \fill (4.75,3.75) circle (1pt);
    \draw (5,3.75);
    \fill (5,3.75) circle (1pt);
    \draw (5.25,3.75);
    \fill (5.25,3.75) circle (1pt);    

\end{tikzpicture}
    \caption{The digraph from Zuther \cite{Z1998} with the red ray being from the thick end.}
    \label{fig:Zuther}
\end{figure}
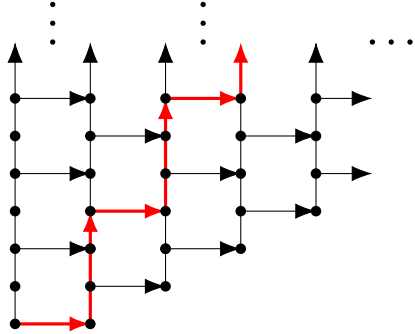

\section{Grids in thin ends}
\label{sec:thinends}

In this section we prove a result for ends of finite in-degree.
This can be done for the out-degree completely analogously, which is why we omit the details for that here.

We begin this section with definitions of bounded versions of the bidirected quarter-grid and the cyclically directed quarter-grid.
For the following definitions let $R_1, \ldots, R_n$ be $n \in \mathbb N$ pairwise disjoint rays, where $V(R_i) = r^i_1, r^i_2, \ldots$ for every $i \in \mathbb N$.

We define the \emph{hexagonal grid of width} $n \in \mathbb N$ as $\bigcup^n_{i=1}R_i$ together with the edges $r^i_jr^{i+1}_j$ for all pairs $(i, j) \in \mathbb{N} \times \mathbb{N}$ where $i = j = 1 \; (\textnormal{mod } 2)$ and $j = 1 \; (\textnormal{mod } 4)$, or $i = j = 0 \; (\textnormal{mod } 2)$ and $j = 2 \; (\textnormal{mod } 4)$, and together with the edges $r^{i+1}_jr^{i}_j$ for all pairs $(i, j) \in \mathbb{N} \times \mathbb{N}$ where $i = j = 1 \; (\textnormal{mod } 2)$ and $j = 3 \; (\textnormal{mod } 4)$, or $i = j = 0 \; (\textnormal{mod } 2)$ and $j = 0 \; (\textnormal{mod } 4)$.
See Figure~\ref{fig:H^4}~\subref{subfig:H^4} for an example of a hexagonal grid of width $4$.

Let us define the \emph{circular grid of width} $n \in \mathbb N$ as $\bigcup^n_{i=1}R_i$ together with the edges $r^i_{j+1}r^{i+1}_j$ for all odd $j \in \mathbb N$ and all $i \in \{2,\ldots, n-1\}$, the edges $r^1_jr^2_j$ for all odd $j$ and the edges $r^n_jr^1_j$ for all even $j$.
See Figure~\ref{fig:circ_3}~\subref{subfig:circ_3} for an example of a circular grid of width $3$.

\begin{figure}[ht]
    \centering
       \begin{subfigure}[b]{0.38\linewidth}
    \centering
       \begin{tikzpicture}
    \draw[-{Latex[length=2.75mm]}] (0,0) -- (0,2.75);
    \foreach \y in {0,0.5,1,1.5,2}
        \fill (0,\y) circle (2pt);

    \draw (0,-0.75) node[anchor=south] {$R_1$};

    \draw[-{Latex[length=2.75mm]}] (1,0) -- (1,2.75);
    \foreach \y in {0,0.5,1,1.5,2}
        \fill (1,\y) circle (2pt);
    
    \draw (1,-0.75) node[anchor=south] {$R_2$};

    \draw[-{Latex[length=2.75mm]}] (2,0) -- (2,2.75);
    \foreach \y in {0,0.5,1,1.5,2}
        \fill (2,\y) circle (2pt);

    \draw (2,-0.75) node[anchor=south] {$R_3$};

    \draw[-{Latex[length=2.75mm]}] (3,0) -- (3,2.75);
    \foreach \y in {0,0.5,1,1.5,2}
        \fill (3,\y) circle (2pt);
    
    \draw (3,-0.75) node[anchor=south] {$R_4$};

    \foreach \y in {0,2}
    \draw[-{Latex[length=2.75mm]}] (0,\y) -- (1,\y);
    
    \draw[-{Latex[length=2.75mm]}] (1,1) -- (0,1);
    
    \draw[-{Latex[length=2.75mm]}] (1,0.5) -- (2,0.5);
    \draw[-{Latex[length=2.75mm]}] (2,1.5) -- (1,1.5);
    
    \foreach \y in {0,2}
    \draw[-{Latex[length=2.75mm]}] (2,\y) -- (3,{\y});

    \draw[-{Latex[length=2.75mm]}] (3,1) -- (2,1);

    \draw (0.5,2.75);
    \fill (0.5,2.75) circle (1pt);
    \draw (0.5,3);
    \fill (0.5,3) circle (1pt);
    \draw (0.5,3.25);
    \fill (0.5,3.25) circle (1pt);    

    \draw (2.5,2.75);
    \fill (2.5,2.75) circle (1pt);
    \draw (2.5,3);
    \fill (2.5,3) circle (1pt);
    \draw (2.5,3.25);
    \fill (2.5,3.25) circle (1pt);    

\end{tikzpicture}
    \caption{The hexagonal grid of width $4$.}
\label{subfig:H^4}
        \end{subfigure}
       \hspace{3em}
        \begin{subfigure}[b]{0.38\linewidth}
    \centering
\begin{tikzpicture}
    \draw[-{Latex[length=2.75mm]}] (0,0.5) -- (0,2.75);
    \foreach \y in {0.5,1,1.5,2}
        \fill (0,\y) circle (2pt);

    \draw (0,2.5) node[anchor=west] {$R_1$};

    \pgfmathsetmacro{\x}{cos(-30)}
    \pgfmathsetmacro{\y}{sin(-30)}
   
    \draw[-{Latex[length=2.75mm]}] (0.5*\x,0.5*\y) -- ({2.75*\x},{2.75*\y});
    \foreach \z in {0.5,1,1.5,2}
        \fill ({\z*\x},{\z*\y}) circle (2pt);
    
    \draw ({2.5*\x + 0.5},{2.5*\y +0.5}) node[anchor=east] {$R_2$};
    
    \pgfmathsetmacro{\a}{cos(210)}
    \pgfmathsetmacro{\b}{sin(210)}
    
    \draw[-{Latex[length=2.75mm]}] (0.5*\a,0.5*\b) -- ({2.75*\a},{2.75*\b});
    \foreach \z in {0.5,1,1.5,2}
        \fill ({\z*\a},{\z*\b}) circle (2pt);

    \draw ({2.5*\a - 0.5},{2.5*\b +0.5}) node[anchor=west] {$R_3$};

    \draw[-{Latex[length=2.75mm]}] (0,0.5) -- ({0.5*\x},{0.5*\y});
    \draw[-{Latex[length=2.75mm]}] (0,1.5) -- ({1.5*\x},{1.5*\y});
    
    \draw[-{Latex[length=2.75mm]}] ({1*\x},{1*\y}) -- ({0.5*\a},{0.5*\b});
    \draw[-{Latex[length=2.75mm]}] ({2*\x},{2*\y}) -- ({1.5*\a},{1.5*\b});
    
    \draw[-{Latex[length=2.75mm]}] ({1*\a},{1*\b}) -- (0,1);
    \draw[-{Latex[length=2.75mm]}] ({2*\a},{2*\b}) -- (0,2);
    
    \draw ({-1.25*\a},{-1.25*\b});
    \fill ({-1.25*\a},{-1.25*\b}) circle (1pt);
    \draw ({-1.5*\a},{-1.5*\b});
    \fill ({-1.5*\a},{-1.5*\b}) circle (1pt);
    \draw ({-1.75*\a},{-1.75*\b});
    \fill ({-1.75*\a},{-1.75*\b}) circle (1pt);

    \draw ({-1.25*\x},{-1.25*\y});
    \fill ({-1.25*\x},{-1.25*\y}) circle (1pt);
    \draw ({-1.5*\x},{-1.5*\y});
    \fill ({-1.5*\x},{-1.5*\y}) circle (1pt);
    \draw ({-1.75*\x},{-1.75*\y});
    \fill ({-1.75*\x},{-1.75*\y}) circle (1pt);

    \draw (0,-1.25);
    \fill (0,-1.25) circle (1pt);
    \draw (0,-1.5);
    \fill (0,-1.5) circle (1pt);
    \draw (0,-1.75);
    \fill (0,-1.75) circle (1pt);    

\end{tikzpicture}
    \caption{The circular grid of width $3$.}
 \label{subfig:circ_3}

        \end{subfigure}
\caption{Hexagonal and circular grids of finite width.}
\label{fig:circ_3}
\label{fig:H^4}
\end{figure}
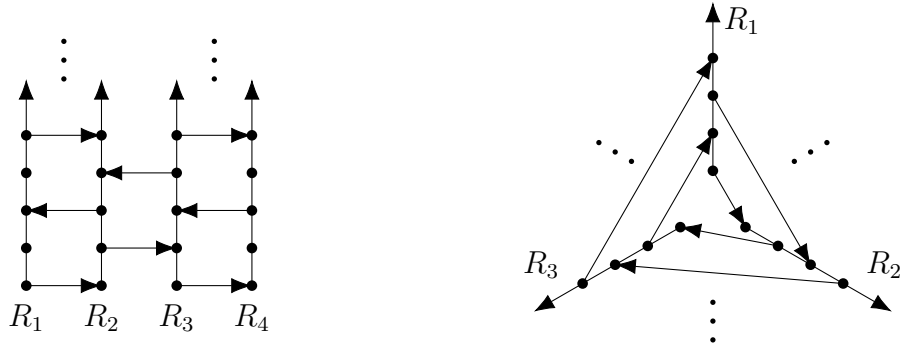

While for thick ends with only countably many rays, we found a subdivision of the bidirected quarter-grid with as many disjoint rays as the end contains, this is not expectable for thin ends with respect to a hexagonal grid of corresponding width.
However, our next result gives at least a bound on the width of a hexagonal grid or a circular grid inside a thin end, depending only on the maximum number of disjoint rays in that end.
Contrary to the situation for thick ends, we can prescribe a set of rays in that end such that the rays $R_i$ of our (hexagonal or circular) grid are from this set.

\begin{thm}
\label{thm:thinEnds}
For all $n\in\mathbb N$ there exists $k(n)\in\mathbb N$ such that in all digraphs with an end $\omega$ of in-degree at least~$k(n)$ and a set $\mathcal R$ of $k(n)$ pairwise disjoint rays in~$\omega$ there is a subdivision of the hexagonal grid of width $n$ or of the circular grid of width $n$ all whose rays $R_i$ are from~$\mathcal R$.
\end{thm}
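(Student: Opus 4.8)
The plan is to reduce the statement to Theorem~\ref{thm:Kasper}, applied to the auxiliary ray digraph on $\mathcal R$. Given $n$, I would fix a sufficiently large $k^*=k^*(n)$ (large enough that case~\ref{itm:Kasper3} of Theorem~\ref{thm:Kasper} forces an $(m,1)$-system with $m$ large compared with $n$, and also $k^*\ge n-1$), and set $k(n):=N$, where $N$ is the constant provided by Theorem~\ref{thm:Kasper} for the parameters $n$ and $k^*$. So let $D$ be a digraph, $\omega$ an end of $D$ of in-degree at least $k(n)$, and $\mathcal R$ a set of $k(n)$ pairwise disjoint rays in $\omega$. Since the rays of $\mathcal R$ are pairwise equivalent and finite in number, the auxiliary $\omega$-ray digraph $D_{\mathcal R}$ is strong, and it has $k(n)=N$ vertices, so Theorem~\ref{thm:Kasper} yields, inside $D_{\mathcal R}$, one of the following: a dicycle on at least $n$ vertices, an $n$-narrow semi-chain $(C_1,\dots,C_{k^*})$ of dicycles, or an $n$-short $(m,1)$-system of ray-dipaths with $m\ge(k^*-1)n+3$.

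In each case I would convert the structure found in $D_{\mathcal R}$ into a subdivision of the appropriate grid in $D$ by the kind of greedy construction carried out in the proof of Theorem~\ref{thm:planarGrid}: every edge of $D_{\mathcal R}$ stands for infinitely many pairwise disjoint dipaths of $D$ avoiding all rays of $\mathcal R$ other than its two endpoints, so one may repeatedly \emph{follow} a ray-dipath of $D_{\mathcal R}$ through fresh finite segments of the rays it meets while avoiding the finite part already built, and interleave the resulting dipaths along the rays so that their attachment points occur in the order prescribed by the target grid. Concretely, a dicycle $S_1\to\dots\to S_\ell$ of $D_{\mathcal R}$ with $\ell\ge n$ gives a subdivision of the circular grid of width $n$ with columns $S_1,\dots,S_n$, using the edges $S_i\to S_{i+1}$ for $i<n$ and the composite connection $S_n\to S_{n+1}\to\dots\to S_\ell\to S_1$ (which avoids $S_2,\dots,S_{n-1}$) for the closing one. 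A semi-chain $(C_1,\dots,C_{k^*})$ gives a subdivision of the hexagonal grid of width $n$: pick $v_i\in V(C_i)\cap V(C_{i+1})$ for $1\le i<k^*$, and $v_0\in V(C_1)$, $v_{k^*}\in V(C_{k^*})$ with $v_0\ne v_1$ and $v_{k^*}\ne v_{k^*-1}$; the semi-chain property forces $v_0,\dots,v_{k^*}$ to be pairwise distinct, and the two arcs of $C_i$ between $v_{i-1}$ and $v_i$ translate to infinitely many disjoint dipaths in both directions between these rays avoiding every $v_j$ with $j\notin\{i-1,i\}$, so $n$ of the $v_i$ become the columns of a hexagonal grid of width $n$.

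For the $(m,1)$-system of $x$--$y$ ray-dipaths $P_1,\dots,P_m$ together with a $y$--$x$ ray-dipath $Q$, if some dicycle $P_i\cup Q$ has at least $n$ vertices we are back in the previous case; otherwise every $P_i$ has fewer than $n$ vertices, so by the pigeonhole principle at least $n$ of the $P_i$ share a common length $\ell\ge 2$ (here we use that $m$, hence $k^*$, is large compared with $n$). Writing $a^t$ for the first inner ray of such a $P_t$, the walk going around $P_s$, then along $Q$, then into $P_t$ yields for every ordered pair $(s,t)$ a ray-dipath from $a^s$ to $a^t$ that meets no ray $a^{t'}$ with $t'\notin\{s,t\}$; realising $n$ of these rays as columns and such ``around'' ray-dipaths as rungs — routing each rung through fresh finite segments of the rays of $Q$ and of the relevant $P_i$, which is possible because every rung is finite and every ray is infinite — gives a subdivision of the hexagonal grid of width $n$. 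The case $x=y$ is the same with the short dicycles through $x$ in place of the $P_i\cup Q$. In all three cases the columns are rays of $\mathcal R\subseteq\omega$, as required.

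The step I expect to be the real work is the last one in each case: arranging the rung-dipaths — which, in the semi-chain and $(m,1)$-system cases, are long and pass through many intermediate rays — to be pairwise internally disjoint and attached to the columns in exactly the cyclic, respectively linear, pattern of the circular, respectively hexagonal, grid. This is the same ``connect / follow / reroute while avoiding a finite vertex set'' bookkeeping as in the proof of Theorem~\ref{thm:planarGrid}, so I would set that machinery up once and invoke it in each of the three cases.
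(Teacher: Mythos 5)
Your proposal is correct and follows essentially the same route as the paper: apply Theorem~\ref{thm:Kasper} to the (strong, finite) auxiliary $\omega$-ray digraph on $\mathcal R$ and translate each of the three outcomes into the corresponding grid using the connecting/rerouting machinery from the proof of Theorem~\ref{thm:planarGrid}. The only differences are cosmetic — the paper extracts the hexagonal grid from the semi-chain by picking a ray from every second of $2n$ dicycles rather than from the intersections, and in the $(m,1)$-system case it simply takes one inner vertex from all but one of the $x$--$y$ dipaths and completely connects them, so your pigeonhole on a common length is unnecessary (only the possible length-one dipath needs discarding).
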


\begin{proof}
Let $k(n)$ be the constant from Theorem~\ref{thm:Kasper} guaranteeing either the existence of a dicycle of length at least $n$ or a semi-chain with $2n$ dicycles or a $((2n-1)n+3,1)$-system of dipaths.
Let $D$ be a digraph and $\omega$ be an end of $D$ with in-degree at least $k(n)$, and let $R_1,\ldots, R_{k(n)}$ be ${k(n)}$ pairwise disjoint rays in~$\omega$.
Let $H$ be the auxiliary $\omega$-ray digraph with $R_1,\ldots, R_{k(n)}$ as vertices.
Since this digraph is strong, there exists either a dicycle of length at least $n$ in~$H$, a semi-chain of $2n$ dicycles, or a $((2n-1)n+3,1)$-system of $x$--$y$ dipaths for some vertices $x,y\in V(H)$ by Theorem~\ref{thm:Kasper}.

In the situation of a dicycle of length at least~$n$, we can cyclically connect segments of the rays on that dicycle in the same way as we did it in the proof of Theorem~\ref{thm:planarGrid} in the situation of dicycles of increasing lengths, and do this infinitely many times.
Thereby, we obtain a subdivision of the circular grid of width $n$.

In the situation of an $((2n-1)n+3,1)$-system of $x$--$y$ dipaths for $x,y\in V(H)$, we can pick one inner vertex from each but one of the $((2n-1)n+3,1)$ distinct $x$--$y$ dipaths and then completely connect segments of those rays, and repeat this infinitely many times.
This results in a digraph that contains the hexagonal grid of width $(2n-1)n+2$ and the circular grid of width $(2n-1)n+2$ as subdivisions.

In the situation of a semi-chain of $2n$ dicycles, we pick vertices from every second dicycle.
Then, we linearly connect segments of those rays and repeat this infinitely many times.
The resulting digraph contains a subdivision of the hexagonal grid of width~$n$.

By construction, the rays corresponding to the $R_i$ in the constructed subdivisions always lie in~$\mathcal R$.
\end{proof}

As corollary, we obtain that the digraph always contains a subdivision of a hexagonal grid of bounded width, see Corollary~\ref{cor:thinEnds}.
We note that this transfers a result by Stein~\cite{S2011}*{Theorem 3.2.2} from graphs to digraphs.

\begin{corollary}\label{cor:thinEnds}
For all $n\in\mathbb N$ there exists $k(n)\in\mathbb N$ such that in all digraphs with an end of in-degree at least~$k(n)$ there is a subdivision of the hexagonal grid of width $n$ all of whose rays are in that end.
\end{corollary}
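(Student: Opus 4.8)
The plan is to reduce this to Theorem~\ref{thm:thinEnds} by absorbing the two extra features that this theorem provides but the corollary does not ask for: the freedom to prescribe the ray set~$\mathcal R$, and the possibility of ending up with a circular grid rather than a hexagonal one. The first comes for free: an end of in-degree at least~$m$ contains, by definition of in-degree, $m$ pairwise disjoint rays, which we may simply take as~$\mathcal R$. The second rests on an observation, stated in the next paragraph, that a circular grid of sufficiently large width already contains a subdivision of the hexagonal grid of width~$n$ with all its rays in the unique end of the circular grid. Granting this, write $k^*(\cdot)$ for the constant provided by Theorem~\ref{thm:thinEnds} (there denoted~$k(m)$), let $N(n)\geq n$ be large enough for the observation to apply, and take the constant of the corollary to be $k(n):=k^*\bigl(N(n)\bigr)$. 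Given a digraph~$D$ with an end~$\omega$ of in-degree at least~$k(n)$, choose a set~$\mathcal R$ of $k(n)=k^*(N(n))$ pairwise disjoint rays in~$\omega$ and apply Theorem~\ref{thm:thinEnds} with $N(n)$ in place of its~$n$. This yields, inside~$D$ and with all rays in~$\omega$, a subdivision of the hexagonal grid of width~$N(n)$ or of the circular grid of width~$N(n)$, in either case with the rays~$R_i$ coming from~$\mathcal R$. In the first case, restricting to the rays $R_1,\dots,R_n$ already gives a subdivision of the hexagonal grid of width~$n$; in the second case the observation, applied to that circular grid, produces such a subdivision inside~$D$. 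Since every ray in a circular grid of finite width is equivalent to~$R_1$ (checked exactly as in Lemma~\ref{lem:one-ended}), all rays of the subdivision obtained in the second case lie in~$\omega$ as well, which proves the corollary.

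It remains to establish the observation. It is the finite, bounded-width counterpart of the fact, used in the proof of Theorem~\ref{thm:planarGridIntro} and illustrated in Figure~\ref{fig:D_1_in_D_2}, that a cyclically directed quarter-grid contains a subdivision of the bidirected quarter-grid. The routing is the same in spirit: each ray of the hexagonal grid of width~$n$ is realised as a path that spirals around the cyclic order of the rays of the circular grid while climbing upwards, and the crossing edges of the hexagonal grid, which alternate in direction between consecutive rays, are supplied by those segments of the circular grid's rays that the spiralling paths skip over. Since a bounded number of the circular grid's rays suffices to host one ray of the hexagonal grid together with the crossings incident to it, $N(n)$ may be chosen linear in~$n$; its precise value is irrelevant for the corollary.

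The only real work is this last routing, that is, embedding a hexagonal grid of width~$n$ into a circular grid of bounded width as a subdivision. Conceptually it repeats the construction already carried out in Section~\ref{sec:subdivision} for the infinite grids, so I do not anticipate any genuine difficulty; the care required is purely combinatorial bookkeeping, ensuring that along each spiralling ray the incoming and outgoing crossing edges occur in precisely the alternating pattern and order prescribed by the hexagonal grid, so that the union is a subdivision of the hexagonal grid of width~$n$ rather than of some weaker digraph.
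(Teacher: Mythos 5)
Your proposal is correct and follows essentially the same route as the paper: apply Theorem~\ref{thm:thinEnds} at a larger width, note that the prescribed ray set comes for free from the definition of in-degree, and dispose of the circular-grid outcome by embedding a hexagonal grid into it via the same routing that places a bidirected quarter-grid inside a cyclically directed quarter-grid (Figure~\ref{fig:D_1_in_D_2}). The only difference is quantitative — the paper takes the circular grid of width $n+1$ where you allow a width linear in~$n$ — and, like the paper, you leave the routing itself at the level of an appeal to that earlier construction.
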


\begin{proof}
Let $k(n)$ be the value from Theorem~\ref{thm:thinEnds} for finding a hexagonal grid of width $n+1$ or a circular grid of width $n+1$.
So we find one of those as a subdivision in our digraph.
By a similar argument that we find a subdivision of the bidirected quarter-grid within the cyclically directed quarter-grid, we find a subdivision of a hexagonal grid of width $n$ within a circular grid of width $n+1$, which implies the assertion.
\end{proof}

Stein's bound in \cite{S2011}*{Theorem 3.2.2} is sharp as she showed with an example \cite{S2011}*{Example 3.2.3}.
Let us modify her example so that it leads to a digraph with a unique thin end of in-degree $\frac{3}{2}n-1$ without a subdivision of the hexagonal grid of width~$n$.

\begin{example}\label{ex:thinEnds}
Let $D_{\ell}$ be a digraph on $3\ell+1$ rays: $R_0=x_0x_1\ldots$ and $R_i^k=x_0^{i,k}x_1^{i,k}\ldots$ for $1\leq i\leq 3$ and $1\leq k\leq\ell$.
For all $1\leq i\leq 3$ and $1\leq k< \ell$, we add edges $x_jx_j^{i,1}$ and $x_j^{i,k}x_j^{i,k+1}$ for all even $j\in\mathbb N$ and edges $x_j^{i,1}x_j$ and $x_j^{i,k+1}x_j^{i,k}$ for all odd $j\in\mathbb N$.
Then the $3\ell+1$ rays lie in the same end and it is easy to see that there are no more than $3\ell+1$ pairwise disjoint rays in that end. So it has in-degree $3\ell+1$.
The underlying undirected graph of $D_\ell$ is precisely the graph $Y(\ell)$ from Stein's example \cite{S2011}*{Example 3.2.3}, and thus it does not contain a subdivision of the underlying undirected graph of a hexagonal grid of width $2\ell+2$.
Thus, $D_\ell$ cannot contain a hexagonal grid of width $2\ell+2$ as a subdivision.
\end{example}

The bound from Theorem~\ref{thm:Kasper} that leads to the value of $k(n)$ in Corollary~\ref{cor:thinEnds} is much larger than $\frac{3}{2}n-1$, the value that we obtain from Example~\ref{ex:thinEnds}, which is the best lower bound that we have so far.
Thus, the optimal value for $k(n)$ remains unclear.
This motivates the following problem.

\begin{problem}
Let $n\in\mathbb N$.
Determine the smallest value $k(n)$ such that every digraph with an end of in-degree $k(n)$ contains a subdivision of the hexagonal grid of width $n$.
\end{problem}

\section{Weak immersions of bidirected quarter-grids}
\label{sec:edge grid}

In this section, we will prove a grid-theorem for edge-disjoint rays in ends of digraphs.
By \cite{HH2024+}*{Theorem 6.1}, we know that an end containing $n$ pairwise edge-disjoint rays for all $n\in\mathbb N$ contains infinitely many pairwise edge-disjoint rays.
Thus, it is natural to also ask which grid-like structures we obtain for edge-disjoint rays.

For the result of this section, we need the definition of a weak immersion.
Let $D$ and~$H$ be digraphs.
A \emph{weak immersion} of~$H$ in~$D$ is a map $\varphi$ with domain $V(H) \cup E(H)$ such that $\varphi$ restricted to $V(H)$ is injective, has its image in $V(D)$ and such that every edge $uv \in E(H)$ is mapped to a $\varphi(u)$--$\varphi(v)$ dipath in~$D$ where every two such images $\varphi(e)$ and $\varphi(f)$ for distinct $e, f\in E(H)$ are edge-disjoint.

The proof of a corresponding version of Theorem~\ref{thm:planarGrid} involving weak immersions can be obtained in this setting by following its original proof almost verbatim.
In particular, we only slightly change the definition of the auxiliary ray digraphs by allowing edge-disjoint rays to correspond to their vertices, while their edges are still determined by infinite dipath systems consisting of pairwise vertex-disjoint dipaths.
This enables us to apply the results from Section~\ref{sec:fin_digraphs}.
Thus, we obtain the following version of Theorem~\ref{thm:planarGridIntro}.

\begin{thm}
If $D$ is a digraph that contains an end~$\omega$ with infinitely many pairwise edge-disjoint (anti-)rays, then there exists a weak immersion of the (reversed) bidirected quarter-grid in~$D$ with all its (anti-)rays in~$\omega$.\qed
\end{thm}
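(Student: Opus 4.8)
The plan is to retrace the proofs of Theorems~\ref{thm:planarGrid} and~\ref{thm:planarGridIntro} with ``disjoint'' systematically weakened to ``edge-disjoint'', with Menger's theorem replaced by its arc-version, and with every instance of ``avoiding a finite vertex set'' replaced by ``avoiding a finite edge set''. First I would set up the edge-analogue of the auxiliary ray digraph: given a set $\mathcal R$ of rays in $D$, let $D^e_{\mathcal R}$ have vertex set $\mathcal R$ and an edge from $R$ to $R'$ whenever there are infinitely many pairwise edge-disjoint $R$--$R'$ dipaths each edge-disjoint from every ray in $\mathcal R\sm\{R,R'\}$. Since Theorem~\ref{thm:Kasper} is a statement about abstract finite strong digraphs, it needs no change at all; exactly as in the proof of Theorem~\ref{thm:planarGrid}, all of the edge-content is confined to two interface facts: \textbf{(a)} if $\mathcal R$ is finite and any two of its rays are joined in both directions by infinitely many edge-disjoint dipaths, then $D^e_{\mathcal R}$ is strong; and \textbf{(b)} each of the three grid-like objects produced in the proof of Theorem~\ref{thm:planarGrid} can be built using edge-disjointness in place of disjointness.

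For \textbf{(a)} I would use the same cleanup argument as in the vertex case: if infinitely many pairwise edge-disjoint $R$--$R'$ dipaths exist but $D^e_{\mathcal R}$ has no edge $R\to R'$, then infinitely many of these dipaths meet a common ray $R''\in\mathcal R\sm\{R,R'\}$; taking their initial segments up to the first such meeting and their terminal segments from the last one, and shortcutting along $R$, $R''$ and $R'$ to restore internal disjointness, yields edges $R\to R''$ and $R''\to R'$, and iterating produces a directed $R$--$R'$ path in $D^e_{\mathcal R}$; strongness follows since the rays are pairwise edge-equivalent. For \textbf{(b)}, the iterative constructions of the proof of Theorem~\ref{thm:planarGrid} — connecting the current ray-segments to a fixed $R^*$, then cyclically / linearly / completely connecting them, then rerouting along the dipaths $P_i^j$, then adjoining the fresh rays — transfer line by line: at each stage only finitely many edges have been used, so among the infinitely many pairwise edge-disjoint dipaths witnessing a given edge of the current auxiliary digraph all but finitely many avoid that finite edge set, and the ray-segments involved can be prolonged as needed. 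One then applies the (identically proved) edge-version of Lemma~\ref{lem:one-ended} to conclude that all rays of the resulting subdigraph lie in $\omega$, and extracts a weak immersion of the bidirected quarter-grid from the cyclically directed grids and from the complete ray digraph exactly as in Figure~\ref{fig:D_1_in_D_2}, since that last step is purely a statement about substructures of fixed digraphs. The anti-ray and reversed-grid statements then follow, as for Theorem~\ref{thm:planarGridIntro}, by reversing all edges of $D$.

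The part I expect to be the main obstacle is precisely what ``almost verbatim'' hides: once vertex-disjointness is dropped, the basic move ``run along a ray until a connecting dipath leaves it to a tail of the next ray'' no longer automatically yields a \emph{dipath} (the connecting dipath may revisit an already-traversed vertex), and distinct images $\varphi(R_i)$ are now allowed to share vertices, which is harmless for edge-disjointness but must not force two vertices of the bidirected quarter-grid to receive the same image. To keep $\varphi$ injective on the vertex set one must, at each stage, designate the new branch vertices (the images of the vertices of the bidirected quarter-grid) to be vertices not previously reserved, which calls for the standard surgeries on the witnessing families of edge-disjoint dipaths together with the fact that only finitely much has been built so far and the rays being extended still have infinitely many vertices; and to keep each $\varphi(R_i)$ an honest ray one shortcuts away any closed sub-walk that arises, which removes no edges and preserves edge-disjointness with everything else. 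Making these two bookkeeping points precise — fresh branch vertices and shortcutting, compatibly with the edge-disjointness accounting that drives the recursion — is the only portion of the argument that is genuinely more delicate than in the vertex-disjoint case; the remainder is verbatim.
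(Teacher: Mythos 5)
Your proposal matches the paper's treatment exactly: the paper itself proves this theorem only by asserting that the proof of Theorem~\ref{thm:planarGrid} carries over ``almost verbatim'' with disjointness replaced by edge-disjointness, which is precisely the route you take. Your additional discussion of the edge-version of the auxiliary ray digraph, the finite-edge-set avoidance, and the bookkeeping needed to keep the images genuine dipaths and the vertex map injective is a faithful (indeed more explicit) account of what that ``almost verbatim'' adaptation requires.
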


Similarly, Theorem~\ref{thm:thinEndsImmersion} can be obtained almost verbatim from the proof of Theorem~\ref{thm:thinEnds}.

\begin{thm}\label{thm:thinEndsImmersion}
 For all $n\in\mathbb N$ there exists $k(n)\in\mathbb N$ such that in all digraphs with an end $\omega$ and a set $\mathcal R$ of at least~$k(n)$ pairwise edge-disjoint rays in~$\omega$ there is a weak immersion of the hexagonal grid of width $n$ or of the circular grid of width $n$ all whose rays $R_i$ are from~$\mathcal R$.\qed
\end{thm}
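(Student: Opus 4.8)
The plan is to follow the proof of Theorem~\ref{thm:thinEnds} essentially verbatim, replacing vertex-disjointness by edge-disjointness and subdivisions by weak immersions throughout, and to check the handful of places where this is not purely cosmetic. First I would take $k(n)$ to be exactly the constant used there, coming from Theorem~\ref{thm:Kasper}: the one forcing, in every strong digraph on at least $k(n)$ vertices, a dicycle of length at least $n$, a semi-chain of $2n$ dicycles, or an $((2n-1)n+3,1)$-system of dipaths. Given a digraph $D$, an end $\omega$ and a set $\mathcal R$ of at least $k(n)$ pairwise edge-disjoint rays in~$\omega$, I would pick $k(n)$ of them and form the auxiliary $\omega$-ray digraph $H$ on this set. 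The dipaths witnessing its edges are internally disjoint from every ray of $\mathcal R$ except their two endpoints, hence in particular edge-disjoint from those rays; truncating each at its last vertex on its initial ray and its first subsequent vertex on its terminal ray, I may further assume every such dipath to be internally disjoint from $\bigcup\mathcal R$, and therefore edge-disjoint from all of $\mathcal R$. Since the chosen rays are pairwise equivalent, $H$ is strong --- this is the remark preceding Theorem~\ref{thm:planarGrid}, and it is seen by taking, for a nonempty proper $S\subseteq\mathcal R$ with no edge of $H$ from $S$ to $\mathcal R\sm S$, infinitely many pairwise vertex-disjoint dipaths from a ray of $S$ to a ray of $\mathcal R\sm S$, cutting from each the segment from its last vertex on $\bigcup S$ to the first subsequent vertex on $\bigcup\mathcal R$, and applying the pigeonhole principle to reach a contradiction. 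Thus Theorem~\ref{thm:Kasper} applies to~$H$ and yields one of its three substructures.

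I would then treat the three cases exactly as in the proof of Theorem~\ref{thm:thinEnds}: I realise the grid's rays $R_1,R_2,\ldots$ as tails of the $k(n)$ chosen rays of~$\mathcal R$ and add, at successively farther-out points and repeated infinitely often, the required cross-edges --- cyclically between consecutive rays on the dicycle in the long-dicycle case, completely between all chosen rays in the $((2n-1)n+3,1)$-system case, and linearly between consecutive rays taken from every second dicycle in the semi-chain case. The cross-edges are drawn from the dipaths witnessing the edges of~$H$, which by the above may be taken internally disjoint from $\bigcup\mathcal R$. Since each edge of~$H$ is witnessed by infinitely many pairwise vertex-disjoint such dipaths, and only finitely many edges have been committed at any finite stage, at every step a fresh cross-edge that is edge-disjoint from all earlier cross-edges and from $\bigcup\mathcal R$ can be chosen; this is the edge-disjoint counterpart of the ``avoiding a finite set'' bookkeeping used throughout Section~\ref{sec:subdivision}. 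As in Theorem~\ref{thm:thinEnds}, the outcome is a subdigraph of~$D$ realising a hexagonal grid and a circular grid of width $(2n-1)n+2$ in the system case, a hexagonal grid of width $n$ in the semi-chain case, and a circular grid of width $n$ in the long-dicycle case.

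The step I expect to require the only genuinely new argument --- still routine --- is the verification that the object produced is a \emph{weak immersion} of the relevant grid, not a subdivision. Pairwise edge-disjointness of the images of the grid edges is immediate from the construction; that the rays of the immersion carry the prescribed orientations, that they lie in~$\omega$ (being tails of rays of $\mathcal R\subseteq\omega$), and that the construction iterates indefinitely, are verified exactly as in the proof of Theorem~\ref{thm:thinEnds}. What is new is the injectivity of the vertex map: all branch vertices lie on the $k(n)$ rays of~$\mathcal R$; on a single ray they are automatically distinct, but two rays of~$\mathcal R$ are now only edge-disjoint and so may share a vertex. I would handle this by the same bookkeeping as above --- when a new branch vertex is to be placed on a ray, I choose it outside the finite set of branch vertices chosen so far, which is possible because every ray is infinite. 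With this care the vertex map is injective and each grid-ray is a tail of a ray of~$\mathcal R$, so one obtains the desired weak immersion of the hexagonal grid of width~$n$ or of the circular grid of width~$n$ with all its rays $R_i$ from~$\mathcal R$.
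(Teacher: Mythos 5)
Your proposal is correct and takes essentially the same route as the paper, which itself offers no proof beyond the remark that the theorem ``can be obtained almost verbatim from the proof of Theorem~\ref{thm:thinEnds}''. The two points you single out as genuinely new --- that the auxiliary ray digraph remains strong when the rays are only edge-disjoint, and the injectivity of the vertex map of the weak immersion --- are exactly the non-cosmetic adaptations the paper leaves implicit, and your handling of both is sound.
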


\section*{Acknowledgement}

We thank Kasper Johansen for allowing us to use his proof idea for Theorem~\ref{thm:Kasper}, which grew during discussions with Carsten Thomassen and the second author.

\bibliographystyle{amsplain}
\bibliography{grid}

\end{document}